\newtheorem{theorem}{Theorem}[section]
\newtheorem{definition}[theorem]{Definition}
\newtheorem{lemma}[theorem]{Lemma}
\newtheorem{prop}[theorem]{Proposition}
\newtheorem{corollary}[theorem]{Corollary}
\newtheorem{hypothesis}[theorem]{Hypothesis}
\newtheorem{example}[theorem]{Example}
\newtheorem{remark}[theorem]{Remark}
\newtheorem{question}[theorem]{Question}
\newcommand{\abs}[1]{\lvert#1\rvert}
\renewcommand{\epsilon}{\varepsilon}
\DeclareMathAlphabet{\mathpzc}{OT1}{pzc}{m}{it}
\newcommand{\Z}{\mathbb{Z}}
\newcommand{\C}{\mathbb{C}}
\newcommand{\R}{\mathbb{R}}
\renewcommand{\qed}{$\hfill \square$ \bigskip \\}
\renewcommand{\phi}{\varphi}
\newcommand{\Hom}{\text{Hom}}
\renewcommand{\det}{\text{det}}
\renewcommand{\i}{{\bf i}}
\renewcommand{\j}{{\bf j}}
\renewcommand{\k}{{\bf k}}
\newcommand{\su}{\mathfrak{su}}
\begin{document}
\thispagestyle{empty}
\title{A class of knots with simple $SU(2)$-representations} 
\author[Raphael Zentner ]{Raphael Zentner} 
%\date{February 2012}
\address {Fakult\"at f\"ur Mathematik \\
Universit\"at Regensburg\\
Germany}
\email{raphael.zentner@mathematik.uni-regensburg.de}

\begin{abstract}
We call a knot in the 3-sphere $SU(2)$-simple if all representations of the fundamental group of its complement which map a meridian to a trace-free element in $SU(2)$ are binary dihedral.  This is a generalization of being a 2-bridge knot. Pretzel knots with bridge number $\geq 3$ are not $SU(2)$-simple. We provide an infinite family of knots $K$ with bridge number $\geq 3$ which are $SU(2)$-simple. 

One expects the instanton knot Floer homology $I^\natural(K)$ of a $SU(2)$-simple knot to be as small as it can be -- of rank equal to the knot determinant $\det(K)$. In fact, the complex underlying $I^\natural(K)$ is of rank equal to $\det(K)$, provided a genericity assumption holds that is reasonable to expect. Thus formally there is a resemblance to strong L-spaces in Heegaard Floer homology. 
For the class of $SU(2)$-simple knots that we introduce this formal resemblance is reflected topologically: The branched double covers of these knots are strong L-spaces. In fact, somewhat surprisingly, these knots are alternating. However, the Conway spheres are hidden in any alternating diagram.

With the methods we use, we obtain the result that an integer homology 3-sphere which is a graph manifold always admits irreducible representations of its fundamental group. This makes use of a non-vanishing result of Kronheimer-Mrowka.
\end{abstract}

\maketitle

\section{Introduction}

The purpose of this paper is to study knots that are particularly simple with respect to the $SU(2)$-representation variety of the fundamental group of the knot complement.

\begin{definition}
A knot $K$ is called $SU(2)$-simple if the space $R(K;\i)$ of representations of the fundamental group in $SU(2)$, defined in Section \ref{representations} below, contains only representations that are binary dihedral. 
\end{definition}

Two-bridge knots are $SU(2)$-simple. The results of the author in \cite{Z} show that pretzel knots which are not 2-bridge knots are not $SU(2)$-simple knots. The author was tempted to believe that 2-bridge knots were the only $SU(2)$-simple knots. However, in this paper we give a large class of $SU(2)$-simple knots. 

This makes use of the following fact: 
\setcounter{section}{3}
\setcounter{theorem}{0}
\begin{prop}
	Let $K$ be a knot. If $\pi_1(\Sigma_2(K))$ has only cyclic $SO(3)$ representations, then $R(K;\i)$ is $SU(2)$-simple. Here $\Sigma_2(K)$ denotes the branched double cover of the knot $K$.
\end{prop}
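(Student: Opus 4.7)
The plan is to descend $\rho$ to an $SO(3)$-representation, invoke the hypothesis on $\pi_1(\Sigma_2(K))$, and then lift the conclusion back to $SU(2)$. Given $\rho \in R(K;\i)$, I would form the composition $\bar\rho : \pi_1(S^3 \setminus K) \to SO(3)$ with the double cover $SU(2) \to SO(3)$. Since $\rho$ sends each meridian $\mu$ to a trace-free element of $SU(2)$, which has order $4$, the element $\bar\rho(\mu) \in SO(3)$ has order $2$; equivalently, $\bar\rho(\mu^2) = 1$. Hence $\bar\rho$ factors through the orbifold fundamental group $\pi_1^{orb}(S^3, K, 2) = \pi_1(S^3 \setminus K)/\langle\langle \mu^2 \rangle\rangle$, whose kernel under the $\mathrm{mod}\,2$ abelianisation is canonically identified with $\pi_1(\Sigma_2(K))$. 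By hypothesis, the restriction $\bar\rho|_{\pi_1(\Sigma_2(K))}$ then has cyclic image in $SO(3)$.

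Next, I would show that $\bar\rho$ therefore takes values in an $O(2) \subset SO(3)$. Any nontrivial cyclic subgroup $C \subset SO(3)$ sits inside a unique $SO(2)_\ell$ about some axis $\ell$, and a direct calculation gives that the normaliser of $C$ in $SO(3)$ equals the subgroup $O(2)_\ell$ preserving $\ell$ set-theoretically. Since $\pi_1(\Sigma_2(K))$ is normal in $\pi_1^{orb}(S^3,K,2)$, its image $C = \bar\rho(\pi_1(\Sigma_2(K)))$ is normalised by every $\bar\rho(\mu)$, so when $C$ is nontrivial one concludes $\bar\rho(\mu) \in O(2)_\ell$ and hence the entire image of $\bar\rho$ lies in $O(2)_\ell$. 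The degenerate case $C = \{1\}$ is easier still: the full image of $\bar\rho$ then has order at most $2$ and trivially lies in some $O(2)$.

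To conclude, the preimage of $O(2) \subset SO(3)$ under the double cover $SU(2) \to SO(3)$ is precisely the binary dihedral subgroup of $SU(2)$, so $\rho$ itself takes values in a binary dihedral subgroup and is therefore a binary dihedral representation. The main point requiring care is the normaliser calculation together with the choice of an $O(2)$ accommodating every meridian simultaneously; this is handled uniformly because, as long as $C \neq \{1\}$ fixes the axis $\ell$, every element of the normaliser of $C$ lies in the same $O(2)_\ell$, and hence no compatibility issue arises among different meridians.
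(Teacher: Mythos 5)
Your argument is correct and follows essentially the same route as the paper: descend to $SO(3)$, note that the representation factors through the orbifold group $G_{K,m^2}$, use the hypothesis together with normality of $\pi_1(\Sigma_2(K))$ to place the image in an $O(2)_\ell$, and lift back. The only cosmetic difference is that you phrase the key step as a normaliser computation for a cyclic subgroup of $SO(3)$, whereas the paper splits the sequence $1 \to \pi_1(\Sigma_2(K)) \to G_{K,m^2} \to \Z/2 \to 1$ and analyses the two possible positions of the axis of the image of the meridian; these amount to the same geometric fact.
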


The interest in $SU(2)$-simple knots comes from the fact that they are expected to be particularly simple with respect to Kronheimer-Mrowka's instanton knot Floer homology $I^\natural(K)$ \cite{KM_ss}. In fact, the following Proposition follows from the relation of the underlying chain complex to $R(K;\i)$ and the relationship of instanton knot Floer homology to the Alexander polynomial, established independently by  Kronheimer and Mrowka in \cite{KM_Alex} and Lim in \cite{Lim}. 

\setcounter{section}{7}
\setcounter{theorem}{2}
\begin{prop}
If a knot $K$ is $SU(2)$-simple and satisfies the genericity hypothesis \ref{hypothesis regularity}, then its instanton Floer chain complex $CI^\natural_\pi(K)$ has no non-zero differentials and is of total rank $\det(K)$. In particular, the total rank of reduced instanton knot Floer homology $I^\natural(K)$ is also equal to $\det(K)$. 
\end{prop}

Denoting $Y(T(p,q))$ the complement of a tubular neighborhood of the torus knot $T(p,q)$, we may glue $Y(T(p,q))$ and $Y(T(r,s))$ together along their boundary torus in such a way that a meridian of the first torus knot is mapped 
to a Seifert fibre of the second and vice versa. It is a result of Motegi \cite{Motegi} that these 3-manifolds $Y= Y(T(p,q),T(r,s))$ are $SO(3)$-cyclic, i.e. admit only cyclic $SO(3)$ representations of their fundamental group. 
Using the concept of strongly invertible knots, we obtain 

\setcounter{section}{4}
\setcounter{theorem}{13}
\begin{theorem}
	The 3-manifold $Y(T(p,q),T(r,s))$ comes with an involution with quotient $S^3$. It is a branched double cover of some knot or 2-component link $L(T(p,q),T(r,s))$ in $S^3$, well defined up to mutation by the involution on either side of the essential torus in $Y(T(p,q),T(r,s))$. If $pqrs-1$ is odd then $L(T(p,q),T(r,s)$ is a knot. If in addition both $T(p,q)$ and $T(r,s)$ are non-trivial torus knots, then the knot $L(T(p,q),T(r,s)$ is $SU(2)$-simple, but is not a 2-bridge knot.
\end{theorem}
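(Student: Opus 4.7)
The plan is to assemble the involution on $Y = Y(T(p,q),T(r,s))$ from the standard strong inversions on the two torus-knot exteriors and then analyse its quotient. Each torus knot $T(p,q)$ is strongly invertible, so $Y(T(p,q))$ carries an orientation-preserving involution $\tau_{p,q}$ whose fixed-point set consists of two arcs with endpoints on the boundary torus, whose quotient is a 3-ball, and whose restriction to $\partial Y(T(p,q))$ is the elliptic involution with four fixed points. My first task is to arrange the gluing $\phi$ (meridian $\leftrightarrow$ Seifert fibre) so that it is equivariant with respect to $\tau_{p,q}|_\partial$ and $\tau_{r,s}|_\partial$. Within its mapping-class coset there is such an equivariant representative, unique up to exchanging one pair of boundary fixed points with another; this residual freedom is precisely the Conway-mutation ambiguity once we descend to $S^3$. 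Any such equivariant $\phi$ combines $\tau_{p,q}$ and $\tau_{r,s}$ into a global involution $\tau$ on $Y$.

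Next I would identify the quotient $Y/\tau$ with $S^3$ and read off the branching locus. Each local quotient $Y(T(p,q))/\tau_{p,q}$ is a 3-ball, and the boundary gluing induced by $\phi$ glues the two balls along the quotient of the boundary torus by the elliptic involution, which is a 2-sphere; the result is $S^3$. The fixed-point set of $\tau$ descends to a 1-submanifold $L \subset S^3$ consisting of two arcs on each side of this Conway 2-sphere, joined at the four branch points. Whether $L$ is a knot or a 2-component link is controlled by how the four arcs match up across the branch points, and this is detected by the parity of $|H_1(\Sigma_2(L))|$, since knots have odd determinant and 2-component links have even determinant. A Mayer--Vietoris calculation in $Y = Y(T(p,q)) \cup_\phi Y(T(r,s))$ yields $H_1(Y) \cong \Z/(pqrs - 1)$, so $L$ is a knot exactly when $pqrs - 1$ is odd, and in that case $Y = \Sigma_2(L)$ by construction.

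The two remaining claims are now quick. Motegi's theorem says that $\pi_1(Y)$ is $SO(3)$-cyclic, and Proposition 3.1 then gives that $L(T(p,q),T(r,s))$ is $SU(2)$-simple. For the non-2-bridge conclusion, recall that the branched double cover of a 2-bridge knot is a lens space, hence Seifert fibered. When both $T(p,q)$ and $T(r,s)$ are non-trivial, the Seifert fibres on the two sides are not identified by $\phi$: the fibre of side 1 is sent to the meridian of side 2, which is transverse to the fibre of side 2 (a $(1,rs)$-type curve in meridian--longitude coordinates). Hence the two Seifert fibrations do not extend across the central torus, which is therefore essential and a JSJ torus. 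So $Y$ is a non-trivial graph manifold, not a lens space, and $L$ is not 2-bridge.

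The main obstacle is the bookkeeping in the first two paragraphs: verifying that a $\tau$-equivariant representative of $\phi$ exists in the prescribed mapping-class coset, tracking the fixed-point-swap ambiguity to pin down the mutation indeterminacy, and confirming that the identification of the two quotient 3-balls really produces $S^3$ rather than some other closed 3-manifold. Once this is in place, the component count via $|pqrs - 1|$, the application of Motegi's theorem together with Proposition 3.1, and the JSJ argument ruling out 2-bridgeness are essentially formal.
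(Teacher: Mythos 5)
Your proposal is correct and follows essentially the same route as the paper: build the global involution by checking the gluing matrix commutes with the elliptic involution $-I$ on the boundary tori, identify the quotient as two $3$-balls glued along a sphere (hence $S^3$), count components of the branch locus via the parity of $\lvert H_1(\Sigma_2(L))\rvert = \lvert pqrs-1\rvert$, invoke Motegi plus Proposition 3.1 for $SU(2)$-simplicity, and rule out $2$-bridge by showing $Y$ is not a lens space because the central torus is essential. The only cosmetic differences are that the paper reproves Motegi's $SU(2)$-cyclicity statement directly from the centrality of the Seifert fibres (rather than citing it), and supplements the homological component count with an explicit arc-tracking argument.
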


We give an explicit description of the knots $L(T(p,q),T(r,s))$ as a decomposition of two tangles in Section \ref{diagrams} below. In fact, each tangle is explicitly described in Theorem \ref{tangle_standard_diagram}. Somewhat to our surprise, we have obtained 

\setcounter{section}{6}
\setcounter{theorem}{4}
\begin{theorem}
The knots $L(T(p,q),T(r,s))$ are alternating. The Conway sphere giving rise to the essential torus in the branched double cover is not visible in any alternating diagram of the link. 
\end{theorem}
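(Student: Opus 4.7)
The proof has two independent parts, and my plan is to attack them separately.

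\textbf{Part 1 (alternating).} The plan is to exhibit an explicit alternating diagram of $L(T(p,q),T(r,s))$. Starting from the tangle decomposition produced in Section~\ref{diagrams} and the explicit description in Theorem~\ref{tangle_standard_diagram}, each of the two tangles into which $L(T(p,q),T(r,s))$ decomposes along the Conway sphere is built from data coming from a torus-knot complement $Y(T(p,q))$ (respectively $Y(T(r,s))$) realized as a branched double cover of a tangle in the 3-ball. These constituent tangles are algebraic in Conway's sense, and each admits a standard alternating diagram. The key step is to verify that the two alternating tangle diagrams can be glued along the Conway sphere so that strands continue alternately across the boundary. This is a parity check along the four boundary points of the Conway sphere: one needs the over/under pattern leaving tangle $T_1$ to match the pattern entering tangle $T_2$. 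If the parities do not match in the naive gluing, one can reflect one of the two tangles (which preserves the underlying branched double cover by the defining involutions) to force compatibility. After gluing, a few Reidemeister-II moves should yield a reduced alternating diagram.

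\textbf{Part 2 (hidden Conway sphere).} I would argue by contradiction. Suppose the essential Conway sphere $S$ corresponding to the essential torus in $Y(T(p,q),T(r,s))$ were visible in some alternating diagram $D$ of $L(T(p,q),T(r,s))$. Then $S$ would cut $D$ into two alternating tangles $T_1$, $T_2$, each meeting $S$ in four points. By Menasco's work on essential surfaces in alternating link complements, a Conway sphere visible in an alternating diagram forces a strong constraint on the tangles on either side: in particular, at least one side is forced to be a rational tangle (or built up from visibly rational pieces). Taking branched double covers, a rational tangle lifts to a solid torus, so one of the two JSJ pieces of $Y(T(p,q),T(r,s))$ would have to be a solid torus. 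This contradicts the hypothesis that both $T(p,q)$ and $T(r,s)$ are non-trivial torus knots, since for non-trivial $T(p,q)$ the corresponding piece $Y(T(p,q))$ is a non-trivial Seifert-fibered space over a disc with two exceptional fibres.

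\textbf{Main obstacle.} The genuinely hard part is Part~2: I need the precise topological dictionary that takes a Conway sphere visible in an alternating diagram and translates it into a constraint on the JSJ decomposition of the branched double cover. Menasco's theorems on surfaces in alternating link complements are the natural input, but invoking them correctly requires a careful primeness/reducibility analysis of $D$ first, and a careful check that the "visible Conway sphere" being considered is the right lift of the essential torus. For Part~1, by comparison, once the parity matching is set up, the verification is essentially combinatorial and routine.
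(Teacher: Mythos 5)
There is a genuine gap in both halves of your plan, and the two halves actually undercut each other. In Part~1 you propose to glue an alternating diagram of the tangle $\tau(T(p,q))$ to an alternating diagram of $\tau(T(r,s))$ along the Conway sphere, adjusting parities so that the strands alternate across the boundary. If that could be done, the resulting alternating diagram of $L(T(p,q),T(r,s))$ would exhibit the essential Conway sphere as a \emph{visible} one (a circle meeting the diagram in four points), which is exactly what the second assertion of the theorem rules out; so your Part~1, if it succeeded, would contradict your Part~2. The obstruction is already recorded in Theorem~\ref{tangle_standard_diagram}: each tangle $\tau(T(p,q))$ is a sum of two rational tangles of \emph{opposite} sign, so its natural diagram is never alternating, and no parity fix or reflection inside one ball repairs this. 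The paper's route is different in an essential way: starting from the four-rational-tangle diagram of Figure~\ref{alternating1}, it performs two isotopies that drag a strand \emph{across} the Conway sphere (over one rational-tangle block and under another). This produces an alternating diagram, but only at the cost of destroying the visibility of the Conway sphere --- the sphere persists as a \emph{hidden} Conway sphere in Thistlethwaite's sense. Any correct proof of alternatingness here must similarly mix the two sides of the sphere.

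In Part~2 the key step you invoke --- that a Conway sphere visible in an alternating diagram forces one side to be a rational tangle --- is not a theorem of Menasco's and is false. Essential Conway spheres are routinely visible in alternating diagrams (this is the generic situation for algebraic alternating links, e.g.\ sums of two essential alternating tangles); and a Conway sphere with a rational tangle on one side lifts to a \emph{compressible} torus in the branched double cover, so your proposed dictionary would show that no visible Conway sphere is ever essential, which is circular and wrong. The correct input is the Menasco--Thistlethwaite dichotomy together with Thistlethwaite's rigidity statement: an essential Conway sphere in an alternating link is isotopic to one that is either visible or hidden, and if it is hidden in one alternating diagram then it is hidden in every alternating diagram. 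The paper's proof of non-visibility therefore consists precisely of producing one explicit alternating diagram (Figure~\ref{alternating3}) in which the sphere is hidden, and then citing \cite{Menasco} and \cite{Thistlethwaite}. Your argument by contradiction has no valid replacement for this step.
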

The method of the proof is applicable to a larger class of knots and links. This may be compared to the method of Greene and Levine in \cite{Greene_Levine}. 

\setcounter{section}{8}
\setcounter{theorem}{0}
\begin{corollary}
The 3-manifolds $Y(T(p,q),T(r,s))$ are strong L-spaces (in the sense of Heegaard Floer homology). 
\end{corollary}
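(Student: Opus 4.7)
The plan is to combine the two preceding theorems with a standard fact about branched double covers of alternating links. By Theorem 4.14, the 3-manifold $Y(T(p,q),T(r,s))$ is the branched double cover $\Sigma_2(L)$ of the knot or link $L:=L(T(p,q),T(r,s))$ (well defined up to the mutation ambiguity already noted, which does not affect whether the cover is a strong L-space). By Theorem 6.5, $L$ is alternating. It therefore suffices to establish the following: the branched double cover of a non-split alternating link with non-vanishing determinant is a strong L-space.

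For this I would invoke the Heegaard-diagrammatic construction implicit in the Ozsv\'ath--Szab\'o work on L-spaces of double covers and made explicit in the framework of Levine and Lewallen: from a reduced connected alternating projection of $L$ one produces a pointed Heegaard diagram for $\Sigma_2(L)$ whose Heegaard Floer chain complex $\widehat{CF}$ has total rank equal to $\det(L)$. Since this rank is always bounded below by $|H_1(\Sigma_2(L);\Z)|=\det(L)$, the differential must vanish on this diagram, which is precisely the definition of a strong L-space.

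It remains to verify that $\det(L)\neq 0$, which also guarantees non-splitness of $L$ (alternating split links have vanishing determinant). A short Mayer--Vietoris computation for the gluing $Y(T(p,q))\cup Y(T(r,s))$ --- using that a regular Seifert fibre in the torus-knot complement $Y(T(p,q))$ represents $pq$ times the meridian in $H_1$, and similarly for $Y(T(r,s))$ --- yields $H_1(Y;\Z)\cong\Z/(pqrs-1)$, which is non-trivial and finite whenever both $T(p,q)$ and $T(r,s)$ are non-trivial. In particular $|\det(L)|=|pqrs-1|\neq 0$, so the previous paragraph applies.

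The main obstacle I anticipate is bibliographic rather than mathematical: pinning down the cleanest reference for the ``alternating implies strong L-space'' statement and checking that the alternating diagrams produced in Section 6 can be chosen reduced and connected, so that the Heegaard diagram to which the standard machinery applies is directly available. Conceptually, however, once Theorems 4.14 and 6.5 are in hand together with the determinant computation above, the corollary is essentially immediate.
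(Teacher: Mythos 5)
Your proposal follows essentially the same route as the paper: it combines Theorem 4.14 (the manifold is a branched double cover of the alternating knot $L(T(p,q),T(r,s))$) with Theorem 6.5 and the fact that branched double covers of alternating links are strong L-spaces, which the paper obtains directly from Greene's spanning-tree model \cite{Greene} rather than the Levine--Lewallen framework you mention. The additional determinant verification you supply is already recorded in the paper as Proposition \ref{presentation}, so your argument is correct and matches the intended one.
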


It is therefore tempting to ask whether the class of $SU(2)$-simple knots consists exclusively of knots whose branched double cover is a strong L-space. This would set up some correspondence between instanton knot Floer homology and Heegaard Floer homology. It is by far not true, however, that any alternating knot is $SU(2)$-simple in the sense given above. 
\\

Finally, as a Corollary of Proposition \ref{folk} and by using a non-vanishing result of Kronheimer-Mrowka \cite{KM_sutures} and further results of Bonahon-Siebenmann \cite{Bonahon-Siebenmann}, we obtain the following

\setcounter{section}{9}
\setcounter{theorem}{1}
\begin{corollary}
Let $Y$ be an integer homology sphere which is a graph manifold. Then there is an irreducible representation $\rho: \pi_1(Y) \to SU(2)$. 
\end{corollary}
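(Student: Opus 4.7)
The plan is to exhibit $Y$ as the branched double cover $\Sigma_2(K)$ of some knot $K \subset S^3$, use Kronheimer--Mrowka's existence theorem to produce an irreducible traceless $SU(2)$-representation of $\pi_1(S^3\setminus K)$, rule out that it is binary dihedral using $H_1(Y;\Z)=0$, and then apply Proposition \ref{folk} contrapositively to transport the irreducibility back to $\pi_1(Y)$.

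First I invoke the theorem of Bonahon--Siebenmann that every graph manifold admits an involution with quotient $S^3$; equivalently, every graph manifold $Y$ is the branched double cover of some link $L \subset S^3$. When $H_1(Y;\Z/2)=0$ the branching set is forced to be connected, so $L=K$ is a knot, and $H_1(\Sigma_2(K);\Z)=0$ further gives $\det(K)=\pm 1$. I assume throughout that $Y\neq S^3$, so that $K$ is nontrivial.

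Next I apply the theorem of Kronheimer--Mrowka that, for every nontrivial knot $K\subset S^3$, the set $R(K;\i)$ contains an irreducible representation $\rho$. Irreducible binary dihedral representations in $R(K;\i)$ correspond bijectively to pairs $\{\xi,\xi^{-1}\}$ of non-trivial characters $\xi: H_1(\Sigma_2(K);\Z) \to U(1)$, so under our hypothesis $\det(K)=\pm 1$ there are none. Hence $\rho$ cannot be binary dihedral, and $K$ fails to be $SU(2)$-simple. Proposition \ref{folk}, applied contrapositively, then yields a non-cyclic $SO(3)$-representation of $\pi_1(\Sigma_2(K))=\pi_1(Y)$; since $Y$ is an integer homology sphere, any cyclic (equivalently, abelian) $SO(3)$-representation factors through $H_1(Y;\Z)=0$ and is therefore trivial, so the representation we obtain is in fact irreducible. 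Finally, the obstruction to lifting an $SO(3)$-representation to $SU(2)$ lies in $H^2(Y;\Z/2)=0$, producing the desired irreducible $\rho: \pi_1(Y) \to SU(2)$.

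The main obstacle is the first step: one must invoke the Bonahon--Siebenmann result in a form that applies to all the graph manifolds in question, in particular those built by iterated splicing of Seifert fibered pieces along tori, and verify that the branching link degenerates to a single component under the integer homology sphere hypothesis. Once that is secured, the remaining deductions amount to combining Proposition \ref{folk} with the classical count of dihedral representations and the standard lifting obstruction.
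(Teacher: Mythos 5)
Your proposal is correct and follows the same overall skeleton as the paper: Bonahon--Siebenmann to realise $Y$ as $\Sigma_2(K)$ for a knot $K$ (the branching link being connected because $H_1(Y;\Z/2)=0$, via the formula $\dim_{\Z/2}H_1(\Sigma_2(L);\Z/2)=l-1$), then Kronheimer--Mrowka's existence of an irreducible $\rho\in R(K;\i)$, then transport to $\pi_1(Y)$. Where you diverge is in the transport step. The paper pushes $\rho$ down to a non-abelian representation $\overline{\rho}:G_{K,m^2}\to SO(3)$ of the $\pi$-orbifold group and argues directly: $\pi_1(Y)$ sits inside $G_{K,m^2}$ with index $2$, and if $\overline{\rho}|_{\pi_1(Y)}$ were abelian it would be trivial (as $H_1(Y;\Z)=0$), forcing $\overline{\rho}$ to factor through $\Z/2$ and hence be abelian, a contradiction. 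You instead first rule out that $\rho$ is binary dihedral by Klassen's count $(\det(K)-1)/2=0$ of irreducible binary dihedral classes, conclude $K$ is not $SU(2)$-simple, and then apply Proposition \ref{folk} contrapositively to get a non-cyclic $SO(3)$-representation of $\pi_1(Y)$, which the homology-sphere hypothesis upgrades to an irreducible one that lifts to $SU(2)$. Both routes are valid; yours buys a cleaner logical packaging (everything reduced to quoting Proposition \ref{folk} and Klassen), at the cost of an extra input (Klassen's theorem) that the paper's index-two factoring argument avoids entirely. Your explicit exclusion of $Y=S^3$ (needed so that $K$ is non-trivial, by the Smith conjecture) is a point the paper leaves implicit.
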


\setcounter{section}{1}

\section*{Acknowledgements}
The author would like to thank Stefan Friedl, Paul Kirk, Jianfeng Lin, Ciprian Manoles\-cu, Nikolai Saveliev, and Liam Watson for stimulating conversation. He is particularly grateful to Michel Boileau for sharing his expertise. The author is thankful for support by the SFB `Higher Invariants' at the University of Regensburg, funded by the Deutsche Forschungsgesellschaft (DFG). Furthermore, he would like to thank Joshua Greene, Ilya Kofman, Tye Lidman and Brendan Owens for comments on the first version of this article which appeared on the arXiv. He is also thankful for comments made by the referee. 

\section{Meridian-traceless $SU(2)$ representations}\label{representations}
Throughout we shall feel free to identify the group $SU(2)$ with the group of unit quaternions. By $\i, \j, \k$ we denote the unit quaternions.

\begin{definition}
   Let $K$ be a knot in $S^3$. We assume some base-point fixed in its complement. Let $m$ be a closed based path in $S^3 \setminus K$ that yields a generator of $H_1(S^3 \setminus K;\Z)$. We shall call the following space the representation space of meridian-traceless $SU(2)$ representations. 
   \begin{equation*}
   	R(K;\i) := \{ \rho \in \Hom(\pi_1(S^3 \setminus K),SU(2)) \, | \, 			\rho(m) \sim \i \} \ ,
   \end{equation*}
where $\i$ denotes a purely imaginary quaternion (all of which are conjugate), and where $\rho(m) \sim \i$ denotes the requirement that $\rho(m)$ is conjugate in $SU(2)$ to this element. We shall also denote this space $R(K,SU(2);\i)$ if we want to make the Lie group $SU(2)$ explicit. 
As all based meridians are conjugate, this definition is independent of the choice of meridian.

Likewise, we define 
   \begin{equation*}
   	R(K, SO(3);I) := \{ \rho \in \Hom(\pi_1(S^3 \setminus K),SO(3)) \, | \, 			\rho(m) \sim I \} \ ,
   \end{equation*}
where $I$ denotes an element of order $2$ in $SO(3)$ (all of which are conjugate).
\end{definition}

Calling the representations of $R(K,SU(2);\i)$ {\em meridian-traceless} is sensible because under the standard isomorphism of $SU(2)$ with the unit quaternions the traceless elements correspond precisely to the purely imaginary quaternions.

\section{Binary dihedral representations and the double branched cover}
Let $K$ be a knot, and let $\Sigma_2(K)$ denote the double branched cover of the knot.
Recall that we have a short exact sequence of groups 
\begin{equation*}
	1 \to \Z/2 \to SU(2) \stackrel{\pi}{\to} SO(3) \to 1 \, .
\end{equation*} 
We shall say that a representation of a group $G$ in a group $H$ is `$H$-abelian' or `has only abelian $H$ representations' if its image in $H$ is contained in an abelian subgroup of $H$, and similarly `$H$-cyclic'. In this article the group $G$ will always be some fundamental group, and $H$ will bei either $SO(3)$ or $SU(2)$.   
A representation in $SO(3)$ is called dihedral if its image is contained in a dihedral subgroup of $SO(3)$ -- a group generated by rotations fixing a globally fixed plane, and reflections of that plane. A representation in $SU(2)$ is called binary dihedral if its image in $SO(3)$ is dihedral.

\begin{prop}\label{folk}
	If $\pi_1(\Sigma_2(K))$ has only cyclic $SO(3)$ representations, then $R(K;\i)$ admits only binary dihedral meridian-traceless $SU(2)$ representations, i.e. $K$ is $SU(2)$-simple.
\end{prop}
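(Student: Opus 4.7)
The plan is to compose $\rho$ with the projection $\pi: SU(2) \to SO(3)$ and to exploit the fact that $\pi(\i)$ is an involution. This forces the restriction of the resulting $SO(3)$-representation to the fundamental group of the double cover $X_2 \to X = S^3 \setminus K$ to factor through $\pi_1(\Sigma_2(K))$; the hypothesis then makes it cyclic, and dihedrality of the full representation follows from elementary geometry in $SO(3)$.

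Concretely, set $\bar\rho := \pi \circ \rho$. Since $\rho(m)$ is conjugate to $\i$ and $\i^2 = -1 \in \ker \pi$, we have $\bar\rho(m)^2 = 1$. Let $\Gamma \subset \pi_1(X)$ be the kernel of the composition $\pi_1(X) \twoheadrightarrow H_1(X;\Z) \twoheadrightarrow \Z/2$, so that $\Gamma = \pi_1(X_2)$. The branched cover $\Sigma_2(K)$ is obtained from $X_2$ by gluing in a solid torus whose meridional disk bounds a lift of $m^2$; hence $\pi_1(\Sigma_2(K)) = \Gamma / \langle\langle m^2 \rangle\rangle$. Because $\bar\rho(m^2) = 1$, the restriction $\bar\rho|_\Gamma$ descends to a representation $\bar\rho' : \pi_1(\Sigma_2(K)) \to SO(3)$, which by hypothesis has cyclic image. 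Thus $C := \bar\rho(\Gamma)$ is a cyclic subgroup of $SO(3)$.

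Next I would verify that the full image $G := \bar\rho(\pi_1(X))$ lies in a dihedral subgroup of $SO(3)$. Since $\Gamma \triangleleft \pi_1(X)$ we have $C \triangleleft G$ with $G/C$ of order at most $2$, generated by $\bar\rho(m)$. When $\abs{C} \leq 2$ the group $G$ has order at most $4$ and is abelian, so it embeds into any $O(2) \subset SO(3)$. Otherwise $C$ is a non-trivial cyclic rotation group about a unique axis $L$, and any $g \in SO(3)$ normalizing $C$ must preserve $L$: conjugating a non-trivial $c \in C$ produces a rotation $gcg^{-1}$ of the same angle about the axis $g(L)$, which must itself lie in $C$, forcing $g(L)=L$. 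Hence $\bar\rho(m)$, and therefore all of $G$, lies in the line-stabilizer $O(2)_L \subset SO(3)$ --- a dihedral subgroup in the paper's sense. So $\bar\rho$ is dihedral and $\rho$ is binary dihedral, as required.

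The only step requiring genuine care is the geometric lemma that an involution of $SO(3)$ normalizing a cyclic rotation subgroup $C$ lies in $O(2)_L$; this is clean for $\abs{C} \geq 3$ (including the infinite-order case, where the axis of any non-identity element still uniquely determines $L$), and in the corner cases $\abs{C} \in \{1,2\}$ the automorphism group of $C$ is trivial, so the conjugation action of $\bar\rho(m)$ is automatically trivial and the small abelian image still fits into some $O(2)$. The identification $\pi_1(\Sigma_2(K)) \cong \pi_1(X_2) / \langle\langle m^2 \rangle\rangle$ is a standard fact about double branched covers that I would invoke without proof.
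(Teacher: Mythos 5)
Your proposal is correct and follows essentially the same route as the paper: project to $SO(3)$, note the meridian becomes an involution so the representation restricted to the index-two subgroup descends to $\pi_1(\Sigma_2(K))$ (the paper packages this via the orbifold group $G_{K,m^2}$ and its split extension $1 \to \pi_1(\Sigma_2(K)) \to G_{K,m^2} \to \Z/2 \to 1$), then conclude by the axis/normalizer argument in $SO(3)$. Your handling of the small-order cases is in fact slightly more careful than the paper's.
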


In fact, the condition `only cyclic $SO(3)$ representations' can be replaced by `only cyclic $SU(2)$ representations' by the following 
\begin{lemma}\label{cyclic vs abelian}
Let $Y$ be a closed 3-manifold such that $H_1(Y;\Z/2)=0$. Then the following are equivalent:
\begin{enumerate}
	\item The group $\pi_1(Y)$ has only abelian $SO(3)$ representations.
	\item The group $\pi_1(Y)$ has only abelian $SU(2)$ representations.
	\item The group $\pi_1(Y)$ has only cyclic $SO(3)$ representations.
	\item The group $\pi_1(Y)$ has only cyclic $SU(2)$ representations.
\end{enumerate}
\end{lemma}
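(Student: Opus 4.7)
The plan is to dispatch the trivial implications---cyclic implies abelian in each of $SU(2)$ and $SO(3)$---first, and then to address the non-trivial implications in two independent steps: promoting ``abelian'' to ``cyclic'' within each Lie group, and transferring statements across the central extension $1\to\Z/2\to SU(2)\to SO(3)\to 1$. The hypothesis $H_1(Y;\Z/2)=0$ will enter twice. By universal coefficients it yields $H^1(Y;\Z/2)\cong\Hom(\pi_1(Y),\Z/2)=0$, so that $\pi_1(Y)$ admits no non-trivial homomorphism to $\Z/2$. By $\Z/2$-Poincar\'e duality on the closed $3$-manifold $Y$ it also yields $H^2(Y;\Z/2)=0$, so that every $SO(3)$-representation of $\pi_1(Y)$ lifts to an $SU(2)$-representation, the obstruction being a class of Stiefel--Whitney type.

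Next I would classify the closed abelian subgroups of the two Lie groups. Every abelian subgroup of $SU(2)$ lies in a maximal torus $U(1)$, since two commuting non-central unit quaternions must share an imaginary axis. In $SO(3)$ the same holds \emph{except} for the Klein four group $V_4\cong\Z/2\oplus\Z/2$ of rotations by $\pi$ about three mutually perpendicular axes; its preimage under $\pi\colon SU(2)\to SO(3)$ is the non-abelian quaternion group $Q_8$, and this is the sole source of asymmetry between ``abelian in $SO(3)$'' and ``abelian in $SU(2)$.'' The key structural observation is then that a finitely generated subgroup of the circle $S^1$ has the form $\Z^k\oplus\Z/n$; if $k\geq 1$ there is a surjection onto $\Z/2$ obtained by reducing the $\Z$-summand modulo $2$. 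Composing with the representation would then yield a non-trivial homomorphism $\pi_1(Y)\to\Z/2$, forbidden by $H^1(Y;\Z/2)=0$; hence the image is finite, and a finite subgroup of $S^1$ is cyclic. The same argument rules out any non-trivial subgroup of $V_4$ on the $SO(3)$ side, because $V_4$ admits three independent surjections onto $\Z/2$. Applied to the image of an arbitrary abelian representation, this promotes ``abelian'' to ``cyclic'' and gives (1)$\Leftrightarrow$(3) and (2)$\Leftrightarrow$(4).

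Finally, to bridge the two Lie groups I would use the central extension in both directions. Composing any $SU(2)$-representation with $\pi$ sends cyclic (respectively abelian) images to cyclic (respectively abelian) images, yielding (2)$\Rightarrow$(1) and (4)$\Rightarrow$(3). Conversely, the vanishing $H^2(Y;\Z/2)=0$ lets one lift any $SO(3)$-representation to an $SU(2)$-one, and the desired property then descends back through $\pi$, giving (1)$\Rightarrow$(2) and (3)$\Rightarrow$(4). The main obstacle, and the one point where the hypothesis $H_1(Y;\Z/2)=0$ is genuinely indispensable rather than cosmetic, is the exceptional $V_4$-configuration: without the vanishing of $H^1(Y;\Z/2)$ one could in principle have an $SU(2)$-representation with image in the non-abelian $Q_8\subset SU(2)$ whose $SO(3)$-projection lands in the abelian $V_4$, violating (1)$\Rightarrow$(2). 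The homomorphism-obstruction argument of the previous paragraph is precisely what excludes this pathology.
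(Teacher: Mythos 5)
Your proposal is correct and follows essentially the same route as the paper: lift $SO(3)$ representations using $H^2(Y;\Z/2)\cong H_1(Y;\Z/2)=0$, classify the abelian subgroups of $SO(3)$ and $SU(2)$, and use the vanishing of $\Hom(\pi_1(Y),\Z/2)$ both to kill the exceptional $V_4/Q_8$ configuration and to force a circle-valued image to be finite, hence cyclic (the paper phrases this last point as $H_1(Y;\Z)$ having odd order, which is equivalent for a closed $3$-manifold). The only blemish is a swapped attribution in your bridging paragraph: deducing (1) from (2) is the direction that needs the lifting (an arbitrary $SO(3)$ representation must first be realised as $\pi\circ\tilde{\rho}$ before hypothesis (2) can be applied), whereas deducing (2) from (1) is where the $V_4$ analysis enters -- but since you establish both ingredients, the argument is complete.
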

\begin{remark}
	An infinite abelian subgroup of $SO(3)$ or $SU(2)$ needs not to be cyclic.
\end{remark}

\begin{proof}[Proof of the Lemma]
The obstruction to lifting a representation $\rho: \pi_1(Y) \to SO(3)$ to a representation $\tilde{\rho}: \pi_1(Y) \to SU(2)$ is an element $w_2(\rho) \in H^2(Y;\Z)$. By Poincar\'e duality $H^2(Y;\Z/2) \cong H_1(Y;\Z/2)$, and by our assumption this group vanishes. Therefore, there is no obstruction to lifting $\rho$ to $SU(2)$.

Because of this lifting behavior, if $\pi_1(Y)$ has only abelian $SU(2)$ representations, it can only have abelian $SO(3)$ representations, and if it has only cyclic $SU(2)$ representations, it has only cyclic $SO(3)$ representations. 

Conversely, suppose that $\pi_1(Y)$ has only abelian representations in $SO(3)$. The only abelian subgroups of $SO(3)$ are given by subgroups of rotations with a fixed axis, and subgroups isomorphic to $K=\Z/2 \times \Z/2$ where the three non-trivial elements are given by rotations by angle $\pi$ along three axes which are all pairwise perpendicular. Subgroups of the first kind lift to abelian subgroups of $SU(2)$, so a representation factoring through an abelian subgroup of the first kind lifts to an abelian representation in $SU(2)$. The preimage of $K$ in $SU(2)$ is a non-abelian subgroup of $SU(2)$ with $8$ elements, namely, the quaternionic group $\{ \pm 1, \pm \i, \pm \j, \pm \k \}$ in quaternion notation. However, any representation of $\pi_1(Y)$ with image in $K$ in fact factors through the abelianizabelianisationation $H_1(Y;\Z)$ which has odd order, hence any element has odd order, and therefore such a representation factors in fact through the trivial subgroup of $SO(3)$, and hence has a lift to an abelian subgroup of $SU(2)$.

Similarly as before, using again that $H_1(Y;\Z)$ has odd order and the chinese remainder theorem, one sees that $\pi_1(Y)$ has only cyclic $SU(2)$ representations if it has only cyclic $SO(3)$ representations. 

Clearly, if a representation of $\pi_1(Y)$ has only cyclic $SO(3)$ representations, then it has only abelian $SO(3)$ representations. Conversely, again because $H_1(Y;\Z)$ is odd, any abelian $SO(3)$ representation is in fact cyclic. 
\end{proof}

\begin{proof}[Proof of the Proposition]
Let us denote by $G_K$ the fundamental group of the knot complement, and by $G_{K,m^2}$ the $\pi$-orbifold fundamental group defined by
\[
	G_{K,m^2} := G_K / \langle \langle m^2 \rangle \rangle \ ,
\]
where $\langle \langle m^2 \rangle \rangle$ denotes the normal subgroup generated by the square of the meridian. This is a powerful group that is sufficiently strong to determine the bridge numbers of Montesinos knots, see \cite{Boileau_Zieschang}. 

The projection $\pi$ induces a well-defined map
\begin{equation*}
	\pi_*: R(K,SU(2);\i) \to R(K,SO(3);I) \ .
\end{equation*}
In fact, $\pi$ takes purely imaginary quaternions to rotations with angle $\pi$, so elements of order 4 get mapped to elements of order 2. 

It is shown By Collin and Saveliev in \cite[Proposition 3.4]{Collin-Saveliev} (in a more general situation than the one we need here) that this map $\pi_*$ is a double cover ramified along the binary dihedral representations. (We outline the proof in this simplified context for the sake of completeness: The space $S^3 \setminus K$ has the homology of a circle, and therefore there is no obstruction in lifting any $SO(3)$ representation to a $SU(2)$ representation here. Therefore, the map $\pi_*$ is onto.
The space $\Hom(G_K,\Z/2) \cong \Z/2$ acts on the space of $SU(2)$ representations, and any two differing by this action yield the same $SO(3)$ representation. Conversely, any two yielding the same $SO(3)$ representation differ by such a central representation. Therefore, the map is at most 2 to 1. One easily checks that the fixed points of this involution are precisely the binary dihedral representations. )

An intermediate conclusion is that $R(K,SU(2);\i)$ contains only binary dihedral representations if and only if $R(K,SO(3);I)$ contains only dihedral representations. On the other hand, as in $R(K,SO(3);I)$ meridians are mapped to elements of order 2, any such representation from $G_K$ to $SO(3)$ factors through $G_{K,m^2}$. Hence $R(K,SU(2);\i)$ has only binary dihedral representations in $SU(2)$ if and only if the orbifold fundamental group $G_{K,m^2}$ has only dihedral representations in $SO(3)$. \\

The orbifold fundamental group $G_{K,m^2}$ fits into a short exact sequence of groups
\begin{equation}\label{orbifold group exact sequence}
	1 \to \pi_1(\Sigma_2(K)) \to G_{K,m^2} \to \Z/2 \to 1 \, ,
\end{equation}
as can be proved easily with the Seifert-van Kampen theorem.
This sequence splits by mapping the non-trivial element of $\Z/2$ to the meridian $m$. Hence $G_{K,m^2}$ is a semi-direct product of $\Z/2$ acting on $\pi_1(\Sigma_2(K))$. 

Let us now consider a representation $\rho: G_{K,m^2} \cong \pi_1(\Sigma_2(K)) \rtimes \Z/2 \to SO(3)$ that is cyclic when restricted to $\pi_1(\Sigma_2(K))$. Therefore, the image of $\pi_1(\Sigma_2(K))$ is a finite subgroup of $SO(3)$ that consists of rotations all of which have the same rotation axis $z$. The image $I$ of the generator $m$ of $\Z/2$ has to act on this finite cyclic subgroup of $SO(3)$. There are only two possibilities: The rotation axis of $I$ coincides with $z$, in which case the entire image $\rho(G_{K,m^2})$ is cyclic, or the rotation axis of $I$ is perpendicular to $z$, in which case the image $\rho(G_{K,m^2})$ is a dihedral group. 
\end{proof}

%Moving the base-point in $\Sigma_2(K)$ into the fixed point set of the involution $\sigma$ coming from the construction of the double branched cover, we can think of $\sigma$ 

\section{Strongly invertible knots}
A knot $K \subseteq \R^3 \subseteq S^3$ is called strongly invertible if there is a straight line $A$ in $\R^3$ (extending to an $S^1$ in $S^3$) such that rotation by angle $\pi$ around the axis $A$ preserves $K$, and such that $K$ has precisely two intersection points with $A$. The Figure \ref{strongly_invertible}
 below illustrates the fact that the torus knot $T(3,4)$ is strongly invertible.

Necessarily this involution $\sigma$ reverses the orientation of the knot $K$, so $K$ is an invertible knot. Notice also that $S^3/\sigma$ is homeomorphic to $S^3$. The following result on strongly invertible knots is standard, see \cite{Montesinos_Orsay}. We include a short proof for the sake of completeness. 

\begin{figure}[h!]
\caption{The torus knot $T(3,4)$ is strongly invertible}
\label{strongly_invertible}
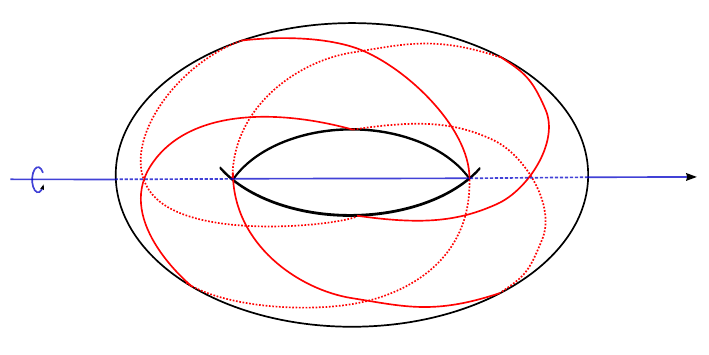
\end{figure}

\begin{lemma}\label{knot complement quotient}
Let $K$ be a strongly invertible knot with axis $A$ and involution $\sigma$ given by rotation by $\pi$ around $A$. Let $N(K)$ be an open tubular neighborhood of $K$ which is invariant under $\sigma$. Let $Y(K):= S^3 \setminus N(K)$. Then the quotient $Y(K)/\sigma$ is homeomorphic to a 3-ball. 
\end{lemma}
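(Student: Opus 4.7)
The plan is to pass to the quotient of the ambient $S^3$ and identify $Y(K)/\sigma$ with the exterior of an unknotted arc in $S^3$, which is a 3-ball.

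First, since $A$ is a straight line in $\R^3$ compactified through $\infty$, it is an unknot in $S^3$ and $\sigma$ is conjugate to a standard linear rotation. An explicit model such as $\R^3 \to \R^3$, $(x,y,z) \mapsto (x^2-y^2,\,2xy,\,z)$ realises the quotient $\R^3/\sigma \cong \R^3$, and passing to one-point compactifications gives $S^3/\sigma \cong S^3$; the quotient map $q\colon S^3 \to S^3$ is then a double cover branched over the unknot $\bar A := q(A)$. Moreover, $\sigma|_K$ is an orientation-reversing involution on $K \cong S^1$ with the two fixed points $p_1,p_2 \in K \cap A$, hence a reflection, and the image $\bar K := q(K)$ is a tame arc in $S^3$ with endpoints $q(p_1),q(p_2) \in \bar A$.

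Next, I would compare the given $N(K)$ with the preimage of a downstairs tubular neighbourhood of $\bar K$. Choose a small open regular neighbourhood $\bar N$ of $\bar K$ in $S^3$ that is a 3-ball meeting $\bar A$ only in two short proper arcs, one around each endpoint $q(p_i)$. In linear coordinates at $p_i$ (taking $A$ the $z$-axis, $K$ the $x$-axis, and $\sigma(x,y,z)=(-x,-y,z)$), a direct local calculation shows $q^{-1}(\bar N)$ to be a standard open solid-torus neighbourhood of $K$ near $p_i$; gluing these local pictures to the free double cover away from $A$ identifies $q^{-1}(\bar N)$ globally as a $\sigma$-invariant tubular neighbourhood of $K$. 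Since any two $\sigma$-invariant tubular neighbourhoods of $K$ are equivariantly ambient isotopic, one may replace the given $N(K)$ by $q^{-1}(\bar N)$ without loss of generality.

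With this choice $N(K)/\sigma = \bar N$, and
\[
	Y(K)/\sigma \;=\; (S^3 \setminus N(K))/\sigma \;=\; (S^3/\sigma) \setminus \bar N \;=\; S^3 \setminus \bar N,
\]
which is the complement of an open regular neighbourhood of a tame arc in $S^3$ and therefore a 3-ball. The main obstacle is the middle step: one must verify the local equivariant model at each fixed point $p_i$ and check that gluing it to the free double cover away from $A$ really produces a solid-torus neighbourhood of $K$, and not, say, a different branched cover. Each piece is standard, but the gluing must be carried out with some care.
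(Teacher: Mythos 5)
Your argument is correct, but it follows a different route from the paper. The paper never passes to the quotient of $N(K)$ at all: it observes that $\partial Y(K)/\sigma$ is the quotient of a torus by the hyperelliptic involution with four fixed points, hence a $2$-sphere, so that $Y(K)/\sigma$ is a compact $3$-manifold embedded in $S^3/\sigma \cong S^3$ with boundary a single $2$-sphere, and then invokes Sch\"onflies' (Alexander's) theorem to conclude it is a ball. You instead push the whole picture down to $S^3/\sigma$, identify $N(K)/\sigma$ with a regular neighbourhood $\bar N$ of the tame arc $\bar K = q(K)$, and recognise $Y(K)/\sigma = S^3 \setminus \bar N$ as the exterior of a ball. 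Both proofs ultimately rest on the same tameness/Sch\"onflies input (your final step, that the complement of an open regular neighbourhood of a tame arc is a ball, is again Alexander's theorem applied to the boundary sphere of $\bar N$), so neither is more elementary; yours is longer because it needs the equivariant uniqueness of invariant tubular neighbourhoods and the local check that the branched double cover of $(\bar N, \bar N \cap \bar A)$ is a solid torus with core $K$ --- the step you rightly flag as requiring care, and which amounts to the standard fact that the double cover of a ball branched over a trivial two-string tangle is a solid torus. What your route buys in exchange is strictly more information: it exhibits $N(K)/\sigma$ as the trivial-tangle ball and $Y(K)/\sigma$ as its complementary tangle, which is precisely the structure the paper uses immediately afterwards in defining $\tau(K)$ and drawing the pillowcase pictures, whereas the paper's proof only establishes the homeomorphism type of the quotient.
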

\begin{proof}
	The quotient of the boundary torus $\partial Y(K)$ by $\sigma$ is a double cover of the 2-sphere, branched along four points. Therefore the manifold $Y(K)/\sigma$ is a 3-manifold with boundary a 2-sphere, and which is a submanifold of $S^3/\sigma \cong S^3$. However, any 2-sphere embedded in $S^3$ splits the 3-sphere into two balls, by Schö\"onflies' theorem.
\end{proof}

The following Lemma is known to experts, but the author is unaware of a reference where a proof is given. At the level of involutions of the fundamental group of torus knot complements, this can be traced back to Schreier \cite{Schreier}. We give a short geometric proof for the sake of completeness. 

\begin{lemma}
	The torus knots $T_{p,q}$ are strongly invertible. 
\end{lemma}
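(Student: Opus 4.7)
The plan is to exhibit $T_{p,q}$ inside $S^3\subseteq\C^2$ and use complex conjugation as the involution. Realize $T_{p,q}$ (with $\gcd(p,q)=1$) as the image of
\[
\gamma:[0,2\pi)\to S^3,\qquad \gamma(t)=\tfrac{1}{\sqrt 2}\bigl(e^{ipt},\,e^{iqt}\bigr),
\]
which lies on the Clifford torus $\{|z_1|=|z_2|=1/\sqrt 2\}$ and wraps $p$ and $q$ times around the two standard meridians, so it is the $(p,q)$-torus knot. Define the involution $\sigma:S^3\to S^3$ by $\sigma(z_1,z_2)=(\bar z_1,\bar z_2)$. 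A direct computation gives $\sigma(\gamma(t))=\gamma(-t)$, hence $\sigma$ preserves $T_{p,q}$ setwise (reversing its orientation, as expected).

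Next I would identify $\sigma$ as a rotation by angle $\pi$ about a great circle. Writing $z_j=x_j+iy_j$ and viewing $S^3\subset\R^4$, the map $\sigma$ becomes the orthogonal transformation $(x_1,y_1,x_2,y_2)\mapsto(x_1,-y_1,x_2,-y_2)$, which fixes the $2$-plane $\{y_1=y_2=0\}$ and acts as $-\id$ on its orthogonal complement. This is rotation by $\pi$ in $\R^4$, so its restriction to $S^3$ is rotation by $\pi$ about the great circle $A:=S^3\cap\{y_1=y_2=0\}$. Stereographically projecting from a point of $A$ turns $A$ into a straight line in $\R^3$ and $\sigma$ into the $\pi$-rotation around this line, matching the definition of strong invertibility.

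It remains to verify that $K\cap A$ consists of exactly two points. We have $\gamma(t)\in A$ iff $e^{ipt}$ and $e^{iqt}$ are real, i.e.\ $pt,\,qt\in\pi\Z$. Writing $1=ap+bq$, this forces $t=apt+bqt\in\pi\Z$, so in $[0,2\pi)$ only $t=0$ and $t=\pi$ occur, giving the two intersection points $\frac{1}{\sqrt 2}(1,1)$ and $\frac{1}{\sqrt 2}((-1)^p,(-1)^q)$. Finally, picking any small $\sigma$-invariant tubular neighbourhood $N(K)$ (obtained, e.g., by averaging the standard neighbourhood under $\sigma$) completes the verification of strong invertibility.

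The proof is essentially a direct check; the only subtle point is recognising that complex conjugation on $\C^2$, restricted to $S^3$, is genuinely a $\pi$-rotation around a great circle (hence around a straight line after stereographic projection), rather than merely some orientation-reversing involution. This is the step I would take most care with.
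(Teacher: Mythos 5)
Your proof is correct, and it takes a different (though closely related) route from the paper. The paper works entirely in $\R^3$: it takes the standard embedded torus pierced by a straight ``skewer'' in four points, identifies the torus with $\R^2/\Z^2$ so that the $\pi$-rotation becomes the point reflection at $(\tfrac12,\tfrac12)$ with four fixed points, and then observes that the line of slope $p/q$ through the origin passes through exactly one other fixed point by coprimality. You instead place the knot on the Clifford torus in $S^3\subset\C^2$ and use complex conjugation; your Bézout computation ($pt,qt\in\pi\Z$ forces $t\in\pi\Z$) is precisely the ``elementary arithmetic exercise'' the paper leaves to the reader, and the restriction of your $\sigma$ to the Clifford torus is the same point reflection. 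What your approach buys is complete explicitness -- every object is given by a formula and every verification is a computation. What it costs is exactly the step you flag: you must justify that conjugating $\sigma$ by stereographic projection from a point of $A$ yields a genuine $\pi$-rotation of $\R^3$ about a straight line, as the paper's definition requires. This does hold (stereographic projection is conformal, so the induced involution of $\R^3\cup\{\infty\}$ fixing $\infty$ is a similarity, hence -- being an involution -- an isometry, and an orientation-preserving isometric involution of $\R^3$ with fixed locus a line is the $\pi$-rotation about it), but you should spell this out rather than assert it; the paper's choice to start from a torus already sitting in $\R^3$ with a literal axis is precisely what lets it avoid this point.
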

\begin{proof}
	We think of a standard embedded torus inside $\R^3$ that is perforated by a skewer in four points, such that rotation by $\pi$ around the skewer yields a symmetry of the torus. When we suitably identify this torus with $\R^2 / \Z^2$, the involution becomes
a point reflection in $(\frac{1}{2},\frac{1}{2})$ in the fundamental domain $[0,1]\times[0,1]$, with the four fixed points being the classes of the four points 
\[
\left(0,0\right),\left(0,\frac{1}{2}\right),\left(0,\frac{1}{2}\right),\left(\frac{1}{2},\frac{1}{2}\right) \, .
\]
We obtain the torus knot $T(p,q)$ by drawing a straight line  in the plane $\R^2$, starting at $(0,0)$, and passing through $(q,p)$. It is now an elementary arithmetic exercise to check that it only passes through one other fixed point of the involution $\sigma$, using that $p$ and $q$ are coprime. 
\end{proof}

\begin{definition}
	We denote by $\tau(K)$ the tangle
	\[
		(Y(K)/\sigma, (A \cap Y(K))/\sigma))
	\]
	obtained from the strongly invertible knot $K$ with involution $\sigma$ around the axis $A$.
\end{definition}

The Figures  \ref{quotient}
 and \ref{quotient_isotopic} 
below are both pictures of the tangle $\tau(T(3,4))$. In both pictures the region shaded in light red indicates the quotient of the tubular neighborhood $N(T(3,4))$, a 3-ball and trivial tangle, and the tangle $\tau(K)$ is the complement. Figure \ref{quotient_isotopic} is obtained from Figure \ref{quotient} by an isotopy of $S^3$ which maps a standard torus around which $T(3,4)$ is winding, modulo the involution, to the `pillowcase' where the four fixed points are indicated by corners. From these pillowcase pictures it is straightforward how to draw tangles for other torus knots. 

\begin{figure}[h!]
\caption{The quotient tangle $\tau(T(3,4))$, half of the image of a Seifert fibre is indicated by the green line}
\label{quotient}
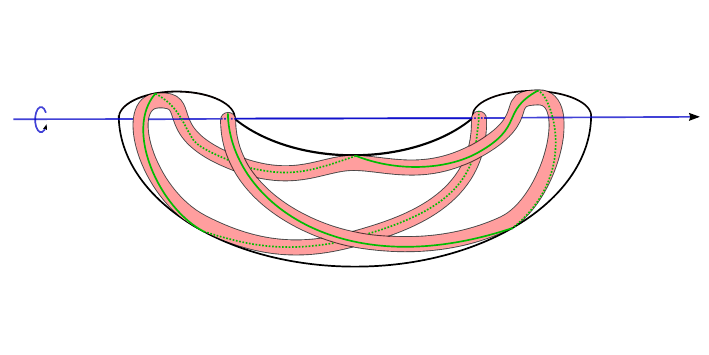
\end{figure}

\begin{figure}[h!]
\caption{Picture of the tangle $\tau(T(3,4))$ after an isotopy that brings the solid torus quotient into the standard `pillowcase' shape. The green circle is the image of a Seifert-fibre on the boundary, the red circle is the image of a meridian on a tubular neighborhood of $T(3,4)$, indicated in light red. The blue squares indicate where the axis (in blue) is entering and leaving the quotient of the tubular neighborhood of the torus knot $T(3,4)$.}
\label{quotient_isotopic}
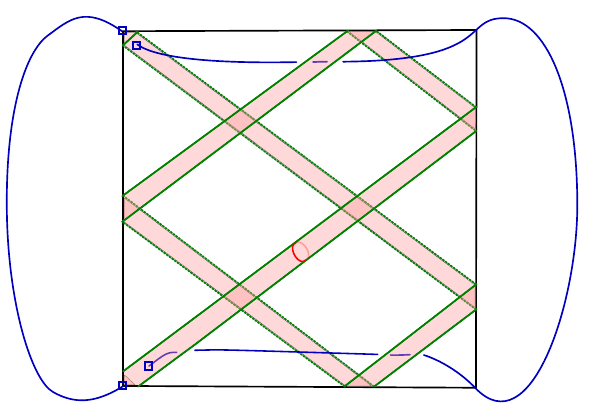
\end{figure}

By a longitude of a knot $K$ we understand a curve parallel to $K$ in its complement which is homologically trivial, or, equivalently, which has linking number $0$ with the knot. By a meridional disk of $K$ we understand a properly embedded disk in a tubular neighborhood of $K$ which has one transversal self-intersection point with $K$. A meridian is the boundary of such a disk. 

We orient boundaries of oriented manifolds with boundary by the `outward normal first' convention.
We choose an orientation of the knot $K$. This determines an orientation of a longitude. It also determines an orientation of a meridian: There is an orientation on the meridional disk such that the intersection of the disk with the knot is positive. This orientation of the disk induces an orientation on the meridian. We give $S^1$ the natural counterclockwise orientation, and we orient products canonically.
\\

\begin{lemma} \label{parametrisation}
For any strongly invertible knot $K$ with involution $\sigma$ and invariant tubular neighborhood $N(K)$ there is an orientation-preserving homeomorphism 
$h_K: \partial Y(K) \to S^1 \times S^1$, where $\partial Y(K)$ denotes the boundary of $Y(K) = S^3 \setminus N(K)$, such that 
\begin{itemize}
	\item[(i)] the circles $\{ pt \} \times S^1$ correspond to meridians of $K$ with matching orientations,
	\item[(ii)] the circles $ S^1 \times \{ pt \} $ correspond to longitudes of $K$ with matching orientations, such that
	\item[(iii)] the restriction of the involution translates to
	 $$h_K \circ \sigma|_{\partial Y(K)} \circ h_K^{-1} = \begin{pmatrix} -1 & 0 \\ 0 & -1 \end{pmatrix} \, , $$ where a map $S^1 \times S^1 \to S^1 \times S^1$ associated with a matrix is the map induced by the linear map $\R^2 \to \R^2$ defined by the matrix under the identification $S^1 \times S^1 \cong \R^2 / \Z^2$. In particular, 
the map $h_K$ maps the four fixed points of $\sigma|_{\partial Y(K)}$ to the points $(0,0), (0,\frac{1}{2}),$ $(\frac{1}{2},0)$ and $(1/2,1/2)$ of $S^1 \times S^1 \cong \R^2 / \Z^2$. 
\end{itemize}
\end{lemma}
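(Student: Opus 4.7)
The plan is to build $h_K$ in two stages: first choose an orientation-preserving homeomorphism $h_0: \partial Y(K) \to T^2$ satisfying only (i) and (ii), then conjugate by a self-homeomorphism of $T^2$ that preserves both coordinate foliations, so as to enforce (iii).

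\emph{Step 1: the action of $\sigma$ on $H_1(\partial Y(K); \Z)$ is $-\id$.} Since $\sigma$ reverses the orientation of $K$ and a longitude $\ell$ is a parallel of $K$ inheriting its orientation, $\sigma_*[\ell] = -[\ell]$. For the meridian $m = \partial D$ with the disk $D$ oriented so that $D \cdot K = +1$, use that $\sigma$ is orientation-preserving on $S^3$ to get $\sigma(D) \cdot \sigma_*(K) = +1$; but $\sigma_*(K) = -K$ as oriented $1$-cycles, so $\sigma(D) \cdot K = -1$. Normalising $\sigma(D)$ by the positive-intersection convention flips its orientation and that of its boundary, yielding $\sigma_*[m] = -[m]$.

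\emph{Step 2: choose $h_0$ and transport $\sigma$.} Using that self-homeomorphisms of $T^2$ are classified up to isotopy by their action on $H_1$, fix an orientation-preserving $h_0$ satisfying (i) and (ii). The involution $\sigma$ preserves the meridian foliation of $\partial Y(K)$ (it preserves $N(K)$ and meridional disks as a family) and the longitude foliation (it preserves the kernel of the linking-number map with $K$). Hence $\sigma_0 := h_0 \sigma h_0^{-1}$ preserves each coordinate foliation of $S^1 \times S^1$, which forces a product decomposition $\sigma_0(x, y) = (A(x), D(y))$ for some $A, D: S^1 \to S^1$. The identity $\sigma_0^2 = \id$ gives $A^2 = D^2 = \id$, while Step 1 transported through $h_0$ forces $A$ and $D$ to have degree $-1$; so both are orientation-reversing involutions of $S^1$.

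\emph{Step 3: normalise each factor.} Any orientation-reversing involution of $S^1$ is conjugate, via an orientation-preserving homeomorphism of $S^1$, to the reflection $x \mapsto -x$: its two fixed points can be sent to $0$ and $1/2$, and the rest of the involution is then forced. Choose such conjugating homeomorphisms $f$ for $A$ and $g$ for $D$, and set $\phi := (f, g): T^2 \to T^2$. Then $\phi$ is orientation-preserving, preserves both coordinate foliations with orientations, and satisfies $\phi \sigma_0 \phi^{-1}(x, y) = (-x, -y)$. Defining $h_K := \phi \circ h_0$ yields the desired orientation-preserving homeomorphism satisfying (i), (ii), and (iii) simultaneously. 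The fixed points of $(x, y) \mapsto (-x, -y)$ on $\R^2/\Z^2$ are precisely the $2$-torsion elements $(0, 0), (1/2, 0), (0, 1/2), (1/2, 1/2)$.

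The principal obstacle is getting the orientations right in Step 1, especially for the meridian, where careful bookkeeping of the intersection-theoretic convention is needed. Once $\sigma_* = -\id$ is established, the product structure of $T^2$ lets the remaining normalisation factor cleanly through two independent copies of the elementary statement about involutions of $S^1$.
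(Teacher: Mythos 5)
Your Steps 1 and 3 are fine: the computation that $\sigma_*=-\mathrm{id}$ on $H_1(\partial Y(K);\Z)$ (using that $\sigma$ preserves the orientation of $S^3$ but reverses that of $K$) is correct, and the normalisation of an orientation-reversing involution of $S^1$ to $x\mapsto -x$ is standard. The gap is in Step 2, and it sits exactly where the actual content of the lemma lies. Your product decomposition $\sigma_0(x,y)=(A(x),D(y))$ requires $\sigma$ to map each leaf of the chosen meridian foliation to a leaf of that same foliation, and likewise for the longitude foliation. For meridians this needs $N(K)$ to be an \emph{equivariant} tubular neighbourhood (invariance of $N(K)$ as a set does not give invariance of the meridional disk fibration), which is arrangeable but should be said. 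For longitudes the justification you give --- that $\sigma$ preserves the kernel of the linking-number map --- is only a statement about the homology class $[\ell]$; it does not imply that $\sigma$ carries the leaf $h_0^{-1}(S^1\times\{c\})$ to another leaf $h_0^{-1}(S^1\times\{c'\})$. A homeomorphism can send a horizontal circle to a curve isotopic to, but not equal to, a horizontal circle, and then no splitting $(A(x),D(y))$ exists for that $h_0$. What is really needed is a $\sigma$-invariant framing of $\partial Y(K)$, i.e.\ an invariant longitude (and invariant meridians) to build $h_0$ around in the first place.

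This is precisely the point the paper's proof addresses and does not take for granted: it exhibits two invariant meridians (each containing two of the four fixed points of $\sigma$ on $\partial Y(K)$), and constructs an invariant longitude by cutting $N(K)$ along the two invariant meridional disks, choosing an arc on the cylindrical boundary piece with ``partial linking number'' $0$ joining fixed points on the two meridians, and taking its union with its $\sigma$-image. Once such invariant meridian and longitude curves through the fixed points are in hand, the remaining normalisation (your Step 3, or the paper's ``straightforward'' omitted part) goes through. So your outline is salvageable, but you must replace the foliation-preservation claim by an explicit construction of an invariant longitude, or by an averaging argument producing a $\sigma$-invariant horizontal foliation of the circle bundle $\partial N(K)\to K$; as written, Step 2 assumes the key point rather than proving it.
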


\begin{proof}
The existence of (two) invariant meridians is clear -- each of these contain two of the intersection points of the axis of $\sigma$ with $\partial Y(K)$. For the existence of invariant longitudes we consider one of the two halves that we obtain from $N(K)$ by cutting along the two invariant meridional disks. On the cylindrical part of its boundary we connect an intersection point of the axis with one meridian to an intersection point of the axis with the other meridian in such a way that the `partial linking number' is 0. We can do this because from a given starting point we have two possibilities for the endpoints on the other meridian. Then we apply the involution $\sigma$ to this curve, and the union of the two now is a longitude of $K$. We omit the rest of the proof which is straightforward.
\end{proof}

Torus knots have Seifert fibered complements. In fact, the 3-sphere has a well-known Seifert fibration with two multiple fibers, of order $p$ and $q$. A regular fibre of this Seifert fibration is a $T_{p,q}$ torus knot. We shall also allow torus knots which are the unknot, i.e. the torus knot $T_{3,1}$, if we want to make allusion to the corresponding Seifert fibration of its complement.

\begin{lemma}
   For torus knots $T_{p,q}$ arising as the regular fibre of a Seifert fibration of $S^3$, we can find an involution $\sigma$ which strongly inverts the torus knot and preserves the Seifert fibered structure of the complement. In particular, we can find a tubular neighborhood $N(T_{p,q})$ which is $\sigma$-invariant and which respects the Seifert fibered structure.
\end{lemma}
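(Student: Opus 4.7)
The plan is to realise $S^3$ as the unit sphere in $\C^2$ with the Seifert $S^1$-action $t \cdot (z_1, z_2) := (t^q z_1, t^p z_2)$, whose two exceptional fibres are $\{z_1 = 0\}$ and $\{z_2 = 0\}$ (of multiplicities $p$ and $q$) and whose regular orbits are $(p,q)$-torus knots. The natural candidate involution is $\sigma(z_1, z_2) := (\bar{z}_1, \bar{z}_2)$. Writing $\sigma$ in the real coordinates $(x_1, y_1, x_2, y_2)$ on $\R^4 = \C^2$ as the diagonal matrix with entries $(1, -1, 1, -1)$ shows at once that $\sigma$ is an orientation-preserving isometry of the round $S^3$, and that its fixed locus is the unit circle in $\R^2 \subset \R^4$, a great-circle axis $A$ which is unknotted in $S^3$. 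As an orientation-preserving isometry fixing a geodesic circle, $\sigma$ is rotation by angle $\pi$ about $A$.

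The key identity $\sigma(t \cdot (z_1, z_2)) = \bar{t} \cdot \sigma(z_1, z_2)$ shows that $\sigma$ permutes Seifert orbits, hence preserves the Seifert fibration of $S^3$ (and therefore of the complement of any $\sigma$-invariant fibre). I would then pick the specific regular fibre through a point $(a, b)$ with $a, b \in \R_{>0}$, namely $T_{p,q} := \{(t^q a, t^p b) : t \in S^1\}$. The identity above shows $\sigma(t^q a, t^p b) = \bar{t} \cdot (a,b)$, so the orbit is $\sigma$-stable.

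To verify the strong inversion condition I count $T_{p,q} \cap A$: a point $(t^q a, t^p b)$ lies in $A$ iff $t^q$ and $t^p$ are both real. Since $\gcd(p,q) = 1$, choosing integers $m, n$ with $mp + nq = 1$ gives $t = (t^p)^m (t^q)^n \in \R$, which together with $|t| = 1$ forces $t = \pm 1$. This yields exactly two intersection points of $T_{p,q}$ with $A$, so $\sigma$ strongly inverts $T_{p,q}$.

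Finally, for the invariant tubular neighbourhood: $\sigma$ descends through the Seifert quotient $\pi : S^3 \to S^2(p,q)$ to an involution $\bar{\sigma}$ of the base orbifold, and I choose a small $\bar{\sigma}$-invariant orbifold disk $D \subset S^2(p,q)$ around $\pi(T_{p,q})$ (obtained, for instance, by averaging any small metric disk over $\bar{\sigma}$ using a $\bar{\sigma}$-invariant metric on the orbifold). Setting $N(T_{p,q}) := \pi^{-1}(D)$ gives a $\sigma$-invariant neighbourhood that is automatically saturated by Seifert fibres. The only point requiring care is that $D$ be chosen small enough to avoid the two orbifold points, so that $\pi^{-1}(D)$ is an honest solid-torus tubular neighbourhood of the regular fibre $T_{p,q}$; this is immediate since the $S^1$-action is free away from the two exceptional fibres. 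I do not foresee any real obstacle; the substantive content is just the explicit model, and the sole thing to double-check is the orientation and fixed-set computation for $\sigma$, which is immediate from the real matrix description.
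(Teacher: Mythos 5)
Your proof is correct, and it is worth noting that the paper itself states this lemma without proof; the only argument the paper gives in this vicinity is for the preceding lemma (bare strong invertibility), which it proves by viewing $T_{p,q}$ on a standardly embedded torus in $\R^3$ and using the point reflection $(x,y)\mapsto -(x,y)$ on $\R^2/\Z^2$ induced by rotation about a skewer. That pictorial model does not directly exhibit compatibility with the Seifert fibration, which is exactly what your algebraic model supplies: with $S^3\subset\C^2$, the action $t\cdot(z_1,z_2)=(t^qz_1,t^pz_2)$ and $\sigma(z_1,z_2)=(\bar z_1,\bar z_2)$, the identity $\sigma(t\cdot z)=\bar t\cdot\sigma(z)$ gives equivariance of the fibration for free, and in the real coordinates $(x_1,y_1,x_2,y_2)$ your $\sigma=\mathrm{diag}(1,-1,1,-1)$ is literally the identity on the $(x_1,x_2)$-plane and $-I$ on the $(y_1,y_2)$-plane, i.e.\ the rotation by $\pi$ about the great circle $\{y_1=y_2=0\}$ -- no appeal to classification of isometries is needed. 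Your intersection count is right (with $mp+nq=1$ one gets $t=\pm1$, and the two points are distinct because $p$ and $q$ cannot both be even), and the saturated invariant neighbourhood $\pi^{-1}(D)$ is correctly constructed; since the orbifold metric descended from the round metric is automatically $\bar\sigma$-invariant, a small metric disk about the fixed point $\pi(T_{p,q})$ already works without averaging. Two cosmetic points you could tighten: observe that the parametrization $t\mapsto(t^qa,t^pb)$ is injective (trivial stabilizer, as $\gcd(p,q)=1$) so that $t=1$ and $t=-1$ really give two distinct points of $K\cap A$; and, to match the paper's definition verbatim, transport the picture to $\R^3$ by stereographic projection from a point of $A$, which turns $A$ into a straight line and $\sigma$ into the standard rotation about it.
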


\begin{lemma}
  Suppose $N(T_{p,q})$ is a regular neighborhood of $T_{p,q}$ which is chosen $\sigma$-invariant and which preserves the Seifert fibered structure. Under an identification $h: \partial Y(T_{p,q}) \to S^1 \times S^1$ as in Lemma \ref{parametrisation} above, the Seifert fibers correspond to lines of slope $pq$ under the identification $S^1 \times S^1 \cong \R^2/\Z^2$. In particular, a Seifert fibre winds $pq$ times around the meridian while once around the longitude. 
\end{lemma}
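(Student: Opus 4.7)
The plan is to identify the homology class $[\phi]\in H_1(\partial Y(T_{p,q});\Z)$ of a Seifert fibre $\phi$ in the basis $(\mu,\lambda)$ furnished by Lemma \ref{parametrisation}, and then read off the slope. First I would note that because the tubular neighbourhood $N(T_{p,q})$ is chosen to respect the Seifert fibred structure, the restriction of the fibration to $N(T_{p,q})$ makes it into a Seifert fibred solid torus whose regular fibres are parallels of the core $T_{p,q}$. Therefore on $\partial Y(T_{p,q})$ the Seifert fibre $\phi$ is precisely the push-off of $T_{p,q}$ along the Seifert fibration, hence a simple closed parallel of $T_{p,q}$.

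Write $[\phi]=a\mu+b\lambda$. The longitude coefficient is immediate: $\lambda$ is by definition a parallel of $T_{p,q}$ which is null-homologous in $Y(T_{p,q})$, and any other parallel (in particular $\phi$) differs from $\lambda$ by an integer framing, i.e.\ by a multiple of $\mu$; hence $b=1$. To compute $a$ I would use the fact that inclusion $\partial Y(T_{p,q})\hookrightarrow Y(T_{p,q})$ kills $\lambda$ and sends $\mu$ to a generator of $H_1(Y(T_{p,q});\Z)\cong\Z$, and that for any $\gamma\subset Y(T_{p,q})$ the class $[\gamma]$ in this $\Z$ equals $\mathrm{lk}(\gamma,T_{p,q})$. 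Applied to $\phi$ this yields $a=\mathrm{lk}(\phi,T_{p,q})$.

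It remains to show $\mathrm{lk}(\phi,T_{p,q})=pq$, which is classical. Because $\phi$ is a Seifert fibre, it lies on a torus $T'$ parallel to the standard unknotted torus $T\subset S^3$ carrying $T_{p,q}$, and on its respective torus it has the same $(p,q)$-slope as $T_{p,q}$ on $T$. The linking of $T_{p,q}$ with such a parallel push-off on a nearby standard torus is a well-known computation (for instance via intersection with a Seifert surface, or by writing $T_{p,q}\cup\phi$ as a $(2p,2q)$ cable on $T$), and equals $pq$. Assembling, $[\phi]=pq\,\mu+\lambda$, i.e.\ slope $pq$ in $\R^2/\Z^2$ under the identification of Lemma \ref{parametrisation}. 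The main obstacle I expect is a sign check: the $\sigma$-equivariance condition of Lemma \ref{parametrisation} pins down $\mu$ and $\lambda$ only up to a joint sign, and one must verify that the natural orientation of $\phi$ coming from the Seifert fibration produces $+pq$ rather than $-pq$. I would handle this by fixing the orientation of $T_{p,q}$ once and for all and tracking it through the identifications $\mathrm{lk}(\phi,T_{p,q})$ and the inclusion-induced map to $H_1(Y(T_{p,q});\Z)$.
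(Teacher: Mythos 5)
Your argument is correct. The paper states this lemma without proof, treating it as classical, so there is no argument in the text to compare against; your homological computation supplies the standard justification. The three ingredients you use are all sound: a fibred tubular neighbourhood of the regular fibre $T_{p,q}$ is trivially fibred, so a boundary fibre is a parallel push-off of the knot and the longitude coefficient is $1$; the meridian coefficient of any curve on $\partial Y(T_{p,q})$ is its linking number with $T_{p,q}$, since the inclusion into the knot exterior kills $\lambda$ and sends $\mu$ to the generator of $H_1$; and two regular fibres of the $(p,q)$ Seifert fibration of $S^3$ have linking number $pq$ (each orbit of the circle action preserves the concentric tori, so the push-off is a parallel $(p,q)$ curve on a nearby standard torus, and the surface framing of a torus knot is $pq$). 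Your closing caveat about signs is the right point to flag: with the orientation conventions fixed in Lemma \ref{parametrisation} (meridian oriented by positive intersection with the knot, longitude by the knot's own orientation), the linking number comes out as $+pq$, which is consistent with the statement even in the cases the paper uses where $pq<0$.
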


%If we compose such a map $h$ further with
% \[
% S_{p,q}:= \begin{pmatrix} 1 & 0 \\ pq & 1 \end{pmatrix} : S^1 \times S^1 \, , 
% \]

We consider the map $s_{p,q}: S^1 \times S^1 \to S^1 \times S^1$ defined by the matrix
 \[
 s_{p,q}:= \begin{pmatrix} 1 & 0 \\ -pq & 1 \end{pmatrix} \, .
 \]
It has the property that 
\[ s_{p,q} \circ h_{T_{p,q}}: \partial Y(T_{p,q}) \to S^1 \times S^1
\]
 sends Seifert fibers to the first $S^1$ factor, and meridians to the second $S^1$ factor, and is orientation preserving. 

We will consider 3-manifolds that we obtain from glueing two torus knot complements together in such a way that the Seifert fibre of the first component is mapped to the meridian of the second component, and that the meridian of the first component is mapped to the Seifert fibre of the second component. In other words, given identifications of the boundary of each torus knot complement with $S^1 \times S^1$, such that the first factor corresponds to Seifert fibers, and such that the second component corresponds to meridians, our glueing is described by the matrix
\[
\begin{pmatrix} 0 & 1 \\ 1 & 0 \end{pmatrix} \, .
\]
Notice that this is orientation-reversing. 

\begin{definition}
Let $\phi: \partial Y(T_{p,q}) \to \partial Y(T_{r,s})$ be the orientation-reversing homeomorphism defined by
\[
	\phi:= h_{T_{r,s}}^{-1} \circ s_{r,s}^{-1} \circ \begin{pmatrix} 0 & 1 \\ 1 & 0 \end{pmatrix} \circ s_{p,q} \circ h_{T_{p,q}} \, .
\]
In other words, $\phi$ maps Seifert fibers of $T_{p,q}$ to meridians of $T_{r,s}$ and meridians of $T_{p,q}$ to Seifert fibers of $T_{r,s}$. We define the closed 3-manifold $Y(T(p,q),T(r,s))$ by the glueing of $ Y(T_{p,q}) $ and $Y(T_{r,s})$ along their boundary via the homeomorphism $\phi$, 
\[
	Y\left(T(p,q),T(r,s)\right):= Y(T_{p,q}) \cup_\phi Y(T_{r,s}) \, .
\]
\end{definition}

\begin{prop}\label{presentation}
   For any four numbers $p,q,r,s \in \Z$, the first homology group $H_1(Y(T(p,q),T(r,s));\Z)$ has a presentation matrix given by 
\[
\begin{pmatrix} 0 & 1 \\ -rspq +1  & rs \end{pmatrix} \, .
\]
In particular, if $1-rspq \neq 0$, the 3-manifold $Y(T(p,q),T(r,s))$ is a rational homology sphere, and its order is given by $\abs{1-rspq}$. 
\end{prop}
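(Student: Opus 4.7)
The plan is to apply the Mayer--Vietoris sequence to the decomposition $Y := Y(T(p,q),T(r,s)) = Y(T_{p,q}) \cup_\phi Y(T_{r,s})$ with intersection the common boundary torus $T^2$. Since both pieces and the torus are connected, the map $H_0(T^2)\to H_0(Y(T_{p,q}))\oplus H_0(Y(T_{r,s}))$ is injective, so the sequence yields
\[ H_1(Y)\;=\;\operatorname{coker}\bigl(\alpha \colon H_1(T^2)\to H_1(Y(T_{p,q}))\oplus H_1(Y(T_{r,s}))\bigr),\]
and it suffices to write down the matrix of $\alpha$ in a convenient basis.

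For each torus knot complement, $H_1(Y(T_{p,q});\Z)\cong\Z$ is generated by the meridian $m_{p,q}$, with the longitude $\ell_{p,q}$ nullhomologous; the analogous statement holds for $(r,s)$. I would take $(m_{p,q},\ell_{p,q})$ as a basis of $H_1(\partial Y(T_{p,q}))\cong\Z^2$. By the preceding lemma the Seifert fibre class on the torus equals $f_{p,q}=pq\, m_{p,q}+\ell_{p,q}$, and similarly $f_{r,s}=rs\, m_{r,s}+\ell_{r,s}$. By the construction of $\phi$, it sends $m_{p,q}\mapsto f_{r,s}$ and $f_{p,q}\mapsto m_{r,s}$. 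Pushing forward through the respective inclusions of the boundary gives $\alpha(m_{p,q})=(1,rs)$ and $\alpha(f_{p,q})=(pq,1)$, whence by linearity
\[ \alpha(\ell_{p,q})\;=\;\alpha(f_{p,q})-pq\,\alpha(m_{p,q})\;=\;(0,\,1-pqrs).\]

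Writing the images of $m_{p,q}$ and $\ell_{p,q}$ as the two columns of a matrix gives
\[ \begin{pmatrix} 1 & 0\\ rs & 1-pqrs \end{pmatrix},\]
which, after swapping its two columns (a unimodular column operation), becomes exactly the matrix stated in the proposition; this proves the first assertion. For the second, a $2\times 2$ integer matrix with nonzero determinant presents a finite abelian group of order equal to the absolute value of its determinant, and the determinant here equals $rspq-1$, so $\abs{H_1(Y)}=\abs{1-rspq}$ whenever this is nonzero, making $Y$ a rational homology $3$-sphere of that order.

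The only nontrivial inputs are supplied by the preceding lemmas: the homological identification of the Seifert fibre with $pq\,m_{p,q}+\ell_{p,q}$ on the boundary of $Y(T_{p,q})$, and the description of $\phi$ as interchanging Seifert fibres with meridians. With these in hand the proof reduces to elementary integer linear algebra; the only mild bookkeeping concerns the sign convention in the Mayer--Vietoris differential, which affects the matrix only up to unimodular equivalence and hence not at all the abelian group it presents.
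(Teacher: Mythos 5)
Your proof is correct and follows exactly the route the paper indicates: the paper's proof is the single sentence ``This follows easily from the Mayer--Vietoris long exact sequence,'' and your argument supplies precisely that computation, with the right inputs (Seifert fibre class $pq\,m+\ell$ on the boundary and the fibre--meridian exchange under $\phi$) and correct bookkeeping. Nothing to add.
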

\begin{proof}
This follows easily from the Mayer-Vietoris long exact sequence.
\end{proof}
\begin{prop}\label{double cover}
The manifold $Y(T(p,q),T(r,s))$ is a branched double cover of a knot or 2-component link in $S^3$ that we denote $L(T(p,q),T(r,s))$. If $1-rspq$ is odd this is a knot; otherwise it is a 2-component link. \end{prop}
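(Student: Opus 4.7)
The plan is to realise $Y := Y(T(p,q),T(r,s))$ as a branched double cover by gluing the strong inversions $\sigma_1$ on $Y(T_{p,q})$ and $\sigma_2$ on $Y(T_{r,s})$ along the boundary torus, then to decide the number of components of the branch set from the parity of $|H_1(Y;\Z)| = |1-pqrs|$ established in Proposition \ref{presentation}. First I would verify that $\sigma_1$ and $\sigma_2$ glue to a global involution on $Y$: by Lemma \ref{parametrisation}(iii) each $\sigma_i$ restricts to $-I$ on the boundary torus under the parametrisations $h_{T_{p,q}}$, $h_{T_{r,s}}$, and the gluing $\phi$ is given (up to these parametrisations) by a matrix $M = s_{r,s}^{-1}\,J\,s_{p,q} \in GL(2,\Z)$. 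Since $-I$ is central in $GL(2,\Z)$, a one-line conjugation gives $\phi\circ\sigma_1|_{\partial}\circ\phi^{-1} = \sigma_2|_{\partial}$, so $\sigma_1 \cup \sigma_2$ descends to an involution $\sigma$ on $Y$.

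Next I would identify the quotient and describe the branch locus. Lemma \ref{knot complement quotient} gives $Y(T_{p,q})/\sigma_i \cong B^3$, while the gluing torus descends to $T^2/(-I) \cong S^2$ with the four fixed points of $-I$ becoming four marked points. Hence $Y/\sigma$ is obtained by gluing two $3$-balls along a common $2$-sphere, and by Sch\"onflies this is $S^3$. The fixed set of $\sigma_i$ in $Y(T_{p,q})$ is the two arcs of the axis that meet $Y(T_{p,q})$, with endpoints at the four fixed points on the boundary; under the quotient these become the two arcs of the tangle $\tau(T(p,q))$ in the $3$-ball. After the gluing, the four arcs assemble across the four marked points into a closed $1$-submanifold of $S^3$, so the branch set is a link $L := L(T(p,q),T(r,s))$ with $Y$ as its branched double cover.

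Finally, to count the components of $L$: it is built from four arcs meeting four points of the separating $2$-sphere, so every component uses at least two arcs and $L$ therefore has at most two components. Since $Y = \Sigma_2(L)$, we have $|H_1(Y;\Z)| = |\det L|$, which by Proposition \ref{presentation} equals $|1-pqrs|$. Knot determinants are always odd, so odd $|1-pqrs|$ forces $L$ to be a knot; conversely, if $L$ has two components then Smith theory applied to $\sigma$ on $Y$ with fixed set $L$ (a $1$-manifold of total $\Z/2$-Betti number $4$) yields $\dim_{\Z/2} H_1(Y;\Z/2) \geq 1$, making $|H_1(Y;\Z)|$ even. The only real subtlety is the equivariance of $\phi$, but this reduces to the centrality of $-I$ in $GL(2,\Z)$ together with Lemma \ref{parametrisation}; from there Sch\"onflies, Smith theory, and the parity of knot determinants conclude.
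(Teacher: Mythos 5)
Your proposal is correct and follows essentially the same route as the paper: the involutions glue because both restrict to $-I$ on the parametrised boundary torus, which is central in $GL(2,\Z)$; the quotient is $S^3$ by the earlier lemma plus Sch\"onflies; and the component count is settled homologically from $|H_1(Y;\Z)|=|1-pqrs|$. The only (minor) divergence is in the last step, where you combine Smith theory with the oddness of knot determinants instead of the paper's single formula $\dim_{\Z/2}H_1(\Sigma_2(L);\Z/2)=l-1$ from the Seifert matrix -- note that as written you attach each parity fact to the wrong implication (oddness of knot determinants gives ``even $\Rightarrow$ two components,'' while the Smith-theory bound gives ``odd $\Rightarrow$ knot''), but both ingredients are present, so the argument is complete.
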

\begin{proof}
If we denote by $\sigma_{p,q}$ and $\sigma_{r,s}$ the strong involutions of the torus knots $T(p,q)$ and $T(r,s)$, then the glueing map $\phi$ above interchanges these actions. This follows from Lemma \ref{parametrisation} above:  With the given parametrisation by $S^1 \times S^1$ of the boundary, the involutions are represented by the matrix 
\[
\begin{pmatrix} -1 & 0 \\ 0 & -1 \end{pmatrix}
\]
which commutes with the glueing map 
\[
s_{r,s}^{-1} \circ \begin{pmatrix} 0 & 1 \\ 1 & 0 \end{pmatrix} \circ s_{p,q} \ .
\]
Therefore, the two involutions extend to an involution $\sigma$ of $Y=Y(T(p,q),T(r,s))$. 
By Lemma \ref{knot complement quotient} above the quotient of $Y$ by this involution is indeed the 3-sphere. The fixed point loci on either side -- $Y(T(p,q))$ or $Y(T(r,s))$ -- are given by two arcs. These glue together either to a 2-component link or to a knot. 

A first argument concerning the number of components is explicit:
On the boundary of $Y(T(p,q))$, two Seifert fibers interchanged by the involution split the boundary torus into two halves, two annuli in fact. If one of $p$ and $q$ is even one can easily see that either fixed point arc in $Y(T(p,q))$ starts and ends at the same component of such a splitting of the boundary torus. (And in fact, one arc connects two points of one component, and the other connects two points of the other component). On the other hand, if both $p$ and $q$ are odd then either arc starts and ends at different components. The glueing prescription now connects the two arcs in such a way that the only possibility to obtain a 2-component link is when $p,q,r$ and $s$ are all odd.

A second argument is homological: For any link $L$ with $l$ components one has
\begin{equation}\label{number of components homological}
  \dim_{\Z/2} H_1(\Sigma_2(L);\Z/2) = l-1 \, .
\end{equation}
This formula follows like this: A presentation matrix of $H_1(\Sigma_2(L);\Z/2)$ is given by $V + V^t$, where $V$ is a Seifert matrix of $L$. See for instance \cite[Theorem 9.1]{Lickorish}. As we are working over $\Z/2$, we have $V + V^t = V - V^t$ which is a sum of blocks of the form
\[
 \begin{pmatrix} 0 & 1 \\ -1 & 0 \end{pmatrix}
\] 
followed by as many $0$'s on the diagonal as the number of components minus 1. 
\end{proof}

\begin{remark}
We would like to point out that the knot or link $L(T(p,q),T(r,s))$ is not determined by the homeomorphism type of $Y(T(p,q),T(r,s))$. It depends on the glueing map, which, in our description above, depends on some choices in the parametrisation of the boundary. Different choices may result in isotopic glueing maps with different identifications of the fixed points (though not any combination is possible). The link 
$L(T(p,q),T(r,s))$ is therefore only determined {\em up to mutation} by the two strong involutions of $T(p,q)$ and $T(r,s)$.
\end{remark}

\begin{remark}
	If $T(r,s)$ is the trivial knot then the manifold $Y(T(p,q),T(r,s))$ is the effect of  Dehn surgery on $T(p,q)$ with slope $\frac{1-rs pq}{-rs} = \frac{-1}{rs} + pq$. By results of Moser \cite{Moser}, these are in fact lens space surgery slopes.
\end{remark}
\smallskip
\begin{lemma}\label{graph}
	If both torus knots $T_{p,q}$ and $T_{r,s}$ are different from the unknot then the 3-manifold $Y(T(p,q),T(r,s))$ admits a non-abelian fundamental group. In particular, it is not a Lens space. \end{lemma}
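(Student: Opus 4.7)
The plan is to realise $\pi_1(Y(T(p,q),T(r,s)))$ as an amalgamated free product via Seifert--van Kampen and then exhibit a non-abelian subgroup.

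More precisely, I would first apply Seifert--van Kampen to the decomposition $Y(T(p,q),T(r,s)) = Y(T_{p,q}) \cup_\phi Y(T_{r,s})$ along the boundary torus. This gives
\[
\pi_1(Y(T(p,q),T(r,s))) \;\cong\; \pi_1(Y(T_{p,q})) *_{\pi_1(T^2)} \pi_1(Y(T_{r,s})),
\]
where the amalgamating maps are induced by the inclusions of the boundary torus into each side, twisted by $\phi$. Recall that for a non-trivial torus knot $T_{p,q}$ (i.e.\ $p,q\ge 2$), the complement $Y(T_{p,q})$ is a Seifert fibred space whose fundamental group has the well-known presentation
\[
\pi_1(Y(T_{p,q})) \;=\; \langle a,b \mid a^p = b^q \rangle,
\]
which is non-abelian as soon as $p,q \ge 2$ (the element $a^p=b^q$ generates the infinite cyclic centre, and the quotient is the non-trivial free product $\Z/p * \Z/q$).

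The second ingredient is that the boundary torus of a non-trivial torus knot complement is incompressible; equivalently, $\pi_1(\partial Y(T_{p,q})) \to \pi_1(Y(T_{p,q}))$ is injective. This is classical (any Seifert fibred space over an orbifold with negative or zero Euler characteristic has incompressible torus boundary, and for $p,q\ge 2$ the base orbifold is not a disc with at most one cone point). By a standard fact about amalgamated free products whose amalgamating maps are injective, each factor embeds into the amalgamated product. Applying this to our situation, the group $\pi_1(Y(T_{p,q}))$ injects into $\pi_1(Y(T(p,q),T(r,s)))$.

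Since $\pi_1(Y(T_{p,q}))$ is non-abelian, so is $\pi_1(Y(T(p,q),T(r,s)))$. As Lens spaces have cyclic (in particular abelian) fundamental group, the manifold $Y(T(p,q),T(r,s))$ cannot be a Lens space. The only non-routine point is the incompressibility of the boundary torus in a non-trivial torus knot complement, which I would cite as a standard consequence of Seifert fibre space theory rather than reprove; everything else is formal.
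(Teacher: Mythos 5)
Your proposal is correct and follows essentially the same route as the paper: the paper likewise observes that the separating torus is incompressible, so each torus knot group injects into $\pi_1(Y(T(p,q),T(r,s)))$ via the amalgamated product structure, and non-abelianness of a non-trivial torus knot group does the rest. Your write-up merely makes explicit the presentation $\langle a,b \mid a^p=b^q\rangle$ and the standard fact about amalgamated products with injective edge maps, both of which the paper treats as well-known.
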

\begin{proof}
The torus along which the two knot complements $Y(T_{p,q})$ and $Y(T_{r,s})$ were glued together is incompressible and separates the 3-manifold $Y(T(p,q),T(r,s))$ into the two torus knot complements. 
In particular, the inclusion of this torus is $\pi_1$-injective into both sides. By the normal form theorem for amalgameted product, see for instance \cite[Theorem 25]{Cohen}, the inclusion of either side into the glued up manifold $Y(T(p,q),T(r,s))$ is also $\pi_1$-injective. Hence the fundamental group of $Y(T(p,q),T(r,s))$ contains a non-abelian subgroup.
\end{proof}

\begin{remark}
	In fact, one can show that under the conditions of the last Lemma the 3-manifold $Y(T(p,q),T(r,s))$ is not a Seifert-fibered 3-manifold. It is in fact a graph 3-manifold with two Seifert-fibSeifert-fiberedred components in its JSJ-decomposition. 
\end{remark}
The results of this section now yield the following 
\smallskip
\begin{theorem} \label{only binary dihedral}
	The 3-manifold $Y(T(p,q),T(r,s))$ comes with an involution with quotient $S^3$. It is a branched double cover of some knot or 2-component link $L(T(p,q),T(r,s))$ in $S^3$, well defined up to mutation by the involution on either side of the essential torus in $Y(T(p,q),T(r,s))$. If $pqrs-1$ is odd then $L(T(p,q),T(r,s)$ is a knot. If in addition both $T(p,q)$ and $T(r,s)$ are non-trivial torus knots, then the knot $L(T(p,q),T(r,s))$ is $SU(2)$-simple, but is not a 2-bridge knot.
\end{theorem}

\begin{proof}
   To simplify notations, we shall write $Y_0$ for $Y(T(p,q))$ and $Y_1$ for $Y(T(r,s))$ and simply $Y$ for the closed manifold $Y(T(p,q),T(r,s))$. We fix a base point on the torus along which $Y_0$ and $Y_1$ were glued together. We choose a meridian  $m_0$ of $T_{p,q}$ on the boundary of $Y_0$ that passes through the base point and a meridian $m_1$ for $T_{r,s}$ which also passes through the base point. We shall also denote by $m_0$ and $m_1$ the corresponding elements in the fundamental group $G_0:= \pi_1(Y_0)$ and $G_1:= \pi_1(Y_1)$. Likewise, we denote by $s_0$ a Seifert fibre at the boundary of $Y_0$, passing through the base point, and also by $s_0$ the corresponding element of $G_0$, and we define $s_1$ analogously. The group $G_i$ is normally generated by $m_i$ for $i=0,1$. The element $s_i$ lies in the centre of $G_i$. The fundamental group $G:=\pi_1(Y)$ is an amalgamated product of $G_0$ and $G_1$ over $\Z^2$. \\
   
That $Y$ is $SU(2)$-cyclic follows from Motegi's main result in \cite{Motegi}. We give a proof of this fact for the sake of completeness: Let $\rho: G \to SU(2)$ be a representation of the fundamental group of $Y$. We claim that $\rho$ has abelian image in $SU(2)$. Suppose this were not the case. Suppose first that the restriction of $\rho$ to the image of $G_0$ in $G$ were non-abelian [In fact $G_0$ injects into $G$ as by Dehn's Lemma the image of the boundary torus into each knot complement is $\pi_1$-injective. But we don't need this.] As $s_0$ is central in $G_0$ this implies that $\rho(s_0)$ lies in the centre $Z(SU(2)) = \{\pm 1\}$ of $SU(2)$. As $s_0=m_1$ we also have that $\rho(m_1)$ is central in $SU(2)$. As $s_1$ lies in the normal subgroup generated by $m_1$ in $G_1$, this implies that in fact $\rho(s_1)$ is central in $SU(2)$, hence also $\rho(m_0)$ is central in $SU(2)$ as $m_0 = s_1$. But as $G_0$ is normally generated by $m_0$ this contradicts the assumption that $\rho$ is non-abelian when restricted to $G_0$. 

Hence the restriction of $\rho$ to the image of $G_0$ or $G_1$ in $G$ is abelian. Next we show that this implies that $\rho$ has abelian image. 
If $\rho(m_0)$ is central we are done. 
Suppose $\rho(m_0)$ were non-central. Then $\rho(s_0)$ must commute with $\rho(m_0)$. If $\rho(s_0)$ is central then $\rho(m_1)$ is central and we are done. If $\rho(s_0)$ is not central it lies in the same maximal torus as $\rho(m_0)$ in $SU(2)$, and hence also $\rho(m_1)$ lies in the same maximal torus. Hence the image of $\rho$ is abelian in $SU(2)$. \\

If $1-pqrs$ is odd then $Y(T(p,q),T(r,s))$ is the branched double cover of the knot $L(T(p,q),T(r,s))$ by Proposition \ref{double cover}. The knot cannot be a 2-bridge knot because $Y(T(p,q),T(r,s))$ is not a Lens space by Lemma \ref{graph} above. That this knot admits only binary dihedral meridian-traceless $SU(2)$ representations follows now from Proposition \ref{folk} and Lemma \ref{cyclic vs abelian} above.
\end{proof}

\section{Examples}

We have computed the Khovanov homology and other knot invariants for `small' examples of the knots $L(T(p,q),T(r,s))$. In some cases, we were able to identify them in the knot table up to 12 crossings by their Alexander and Jones polynomial, using Cha-Livingston's ` Knotinfo' \cite{Knotinfo}.

\begin{equation*}
\begin{tabular}{l|c}
Knot  & Knot in table \\
\hline
$L(T(2,3),T(2,3))$ & $8_{16}$ \\
$L(T(2,3),T(2,-3))$ & $8_{17}$ \\
$L(T(2,3),T(2,5))$ & $9_{32}$ \\
$L(T(2,3),T(2,-5))$ & $9_{33}$ \\
$L(T(2,3),T(2,7))$ & $10_{83}$ \\
$L(T(2,3),T(2,-7))$ & $10_{86}$ \\
$L(T(2,5),T(2,5))$ & $10_{89}$ \\
$L(T(2,5),T(2,-5))$ & $10_{88}$ \\
$L(T(2,5),T(2,7))$ & $11_a54$\\
$L(T(2,7),T(2,7))$ & $12_a1010$ .
\end{tabular}
\end{equation*}

\smallskip

\section{Diagrams, alternatingness}\label{diagrams}
For rational tangles we follow the notations and conventions of \cite{Montesinos_Orsay}. In particular, a sequence of integers $(a_0, \dots, a_n)$ defines a rational tangle, with the convention used in this reference. By $\overline{(a_0, \dots, a_n)}$ we denote the tangle $(a_0, \dots, a_n)$ to which we apply a rotation by angle $\pi$ through the `vertical axis'. For example, the two diagrams below denote the tangle $(3,-4)$ and $\overline{(3,-4)}$. 

\begin{figure}[h!]
\def\svgwidth{0.4\columnwidth}
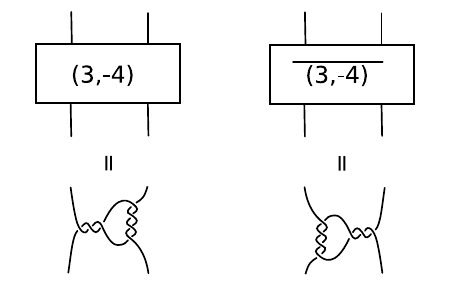
\end{figure}

\begin{figure}[h!]
\caption{The tangle $\tau(T(p,q))$ with $p/q = [a_0,a_1, \dots, a_n]$}
\label{tanglesum}
\def\svgwidth{\columnwidth}
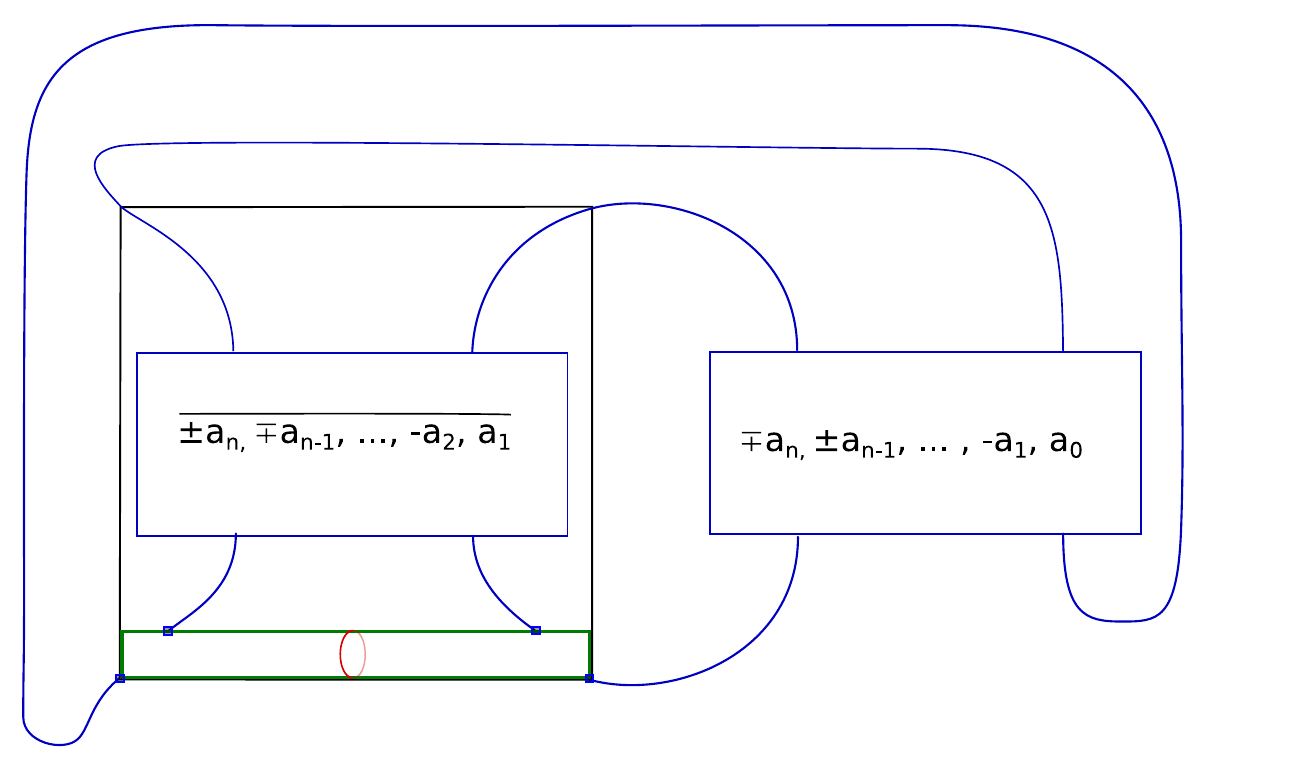
\end{figure}

\begin{theorem}\label{tangle_standard_diagram}
	If the fraction $p/q$ has continued fraction expansion
	\[
		\frac{p}{q} = a_0
          + \cfrac{1}{a_1
          + \cfrac{1}{a_2 + \cfrac{1}{\dots + \cfrac{1}{a_n} } } }   =: [a_0, a_1, \dots, a_n]\, ,
	\]
	one has $\abs{p/q} > 1$, and $n$ is odd, 
	then the tangle $\tau(T(p,q))$ has a diagram given by Figure
	\ref{tanglesum}
	 above. Notice that the two rational tangles appearing there always have opposite sign. In particular, the given diagrams are never alternating.
\end{theorem}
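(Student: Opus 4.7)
The plan is to produce the diagram by combining the genus-one Heegaard splitting of $S^3$ with the continued-fraction algorithm. Write $S^3 = V_1 \cup_T V_2$ with the torus knot $T(p,q)$ lying on the common boundary torus $T$, and choose the strong involution $\sigma$ so that it preserves each $V_i$; such a $\sigma$ exists because the involution coming from the Seifert fibration of $S^3$ respects the standard Heegaard torus. Then the tangle $\tau(T(p,q))$ decomposes across the pillowcase $T/\sigma$ into two half-tangles $\tau_1 \subset V_1/\sigma$ and $\tau_2 \subset V_2/\sigma$, glued along the image of $T$.

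First I would verify that each $\tau_i$ is a rational tangle. A $\sigma$-invariant compressing disk $D_i$ of $V_i$ meets the axis $A$ in a single point and can be chosen to intersect $T(p,q)$ transversely; the complement $V_i \setminus N(T(p,q))$ deformation retracts onto $D_i$ cut along the arcs coming from the meridional disks of $N(T(p,q))$, and the quotient of this disk under $\sigma$ is precisely the spanning disk of a rational tangle in the ball $V_i/\sigma$. The slope of $\tau_1$, in the pillowcase coordinates of Lemma \ref{parametrisation}, is the slope of $T(p,q)$ on $\partial V_1$, namely $p/q$. By Conway's correspondence between rational tangles and continued fractions, $\tau_1$ then admits the standard diagram $(a_0, a_1, \ldots, a_n)$.

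For $\tau_2$ the analysis is parallel, but the meridian and longitude of $V_2$ are interchanged with those of $V_1$, and the induced boundary orientation of $V_2$ is opposite to that of $V_1$. These two reversals do not cancel: under the pillowcase identifications of Lemma \ref{parametrisation}, the slope of $\tau_2$ becomes the negative reciprocal of the slope of $\tau_1$. Combined with the rotation $\overline{(\cdot)}$ forced by the orientation-reversing gluing along the pillowcase, this produces a rational tangle whose continued fraction entries are $-a_i$. The parity hypothesis that $n$ is odd is what ensures that the two continued fractions of opposite sign align end-to-end, with no superfluous $[0,\ldots]$ term at the junction, so that the resulting tangle sum is precisely the one drawn in Figure \ref{tanglesum}.

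Finally, because the two continued-fraction diagrams on either side of the tangle sum have opposite overall signs, at the pillowcase interface there is always a pair of consecutive crossings with the same over/under sense. This obstructs alternation across the interface, so no such diagram is alternating. The main obstacle is the careful orientation bookkeeping for $\tau_2$: one must track simultaneously the involution on each solid torus, the inherited boundary orientations, and the Conway conventions for rational tangle diagrams, and show that the signs and the parity of $n$ conspire to give the opposite-sign phenomenon as stated.
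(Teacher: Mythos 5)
Your route---cutting $S^3$ along the Heegaard torus containing $T(p,q)$ and analysing the two quotient pieces---is genuinely different from the paper's proof, which follows Montesinos: one factors a matrix $\bigl(\begin{smallmatrix} p & * \\ q & * \end{smallmatrix}\bigr) \in Sl(2,\Z)$ into elementary shears, notes that each shear is a horizontal or vertical Dehn twist of the torus descending to a half-twist of the pillowcase, and reads the diagram off the continued fraction expansion. Your decomposition is plausible in principle (each piece $(V_i\setminus N(T(p,q)))/\sigma$ is a ball double-branched-covered by a solid torus, which does force each piece to be a rational tangle), but as written the argument has genuine gaps.

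The first problem is that several of your intermediate claims are not well-posed or not correct. The piece $\tau_1$ is not bounded by the pillowcase of Lemma \ref{parametrisation}: that pillowcase is $\partial N(T(p,q))/\sigma$, whereas the boundary of $(V_1\setminus N(T(p,q)))/\sigma$ is only half of that pillowcase glued to the disk $(T\setminus N(T(p,q)))/\sigma$, so ``the slope of $\tau_1$ in the pillowcase coordinates of Lemma \ref{parametrisation} is $p/q$'' does not parse, and in any case the strands of $\tau_i$ are arcs of the axis $A$, not curves of slope $p/q$; relating them to the slope of $K$ on $T$ is exactly the computation that has to be done. (Also, $V_i\setminus N(T(p,q))$ is a solid torus and does not deformation retract onto a cut-open disk.) The second and more serious problem is that the entire quantitative content of the theorem---that the two sub-tangles carry the specific fractions that assemble into Figure \ref{tanglesum}, that the entries of one are the negatives of the relevant entries of the other, and that the oddness of $n$ places the pieces correctly---is asserted rather than derived; you yourself flag this sign and parity bookkeeping as ``the main obstacle'' without resolving it. Until that is carried out (say, by tracking where the meridian disks of the two solid tori land in the quotient and converting to Conway's conventions), the proposal exhibits a candidate decomposition but does not establish that the resulting diagram is the one of Figure \ref{tanglesum}, nor the opposite-sign statement on which the non-alternatingness of these particular diagrams rests.
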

\begin{proof}
  The proof follows from the method in the proof of Theorem 1 in Montesinos' Orsay lecture notes \cite{Montesinos_Orsay} to which we refer the reader. The essential point is that we may write a linear map of the torus $T^2 = \R^2 / \Z^2$ of the form
  \[
  	\begin{pmatrix}  p & * \\ q & * \end{pmatrix} \in Sl(2,\Z)
  \]
as a product of matrices of the form 
\[
	\begin{pmatrix}  1 & a \\ 0 & 1 \end{pmatrix} \ \mbox{or} \ \begin{pmatrix}  1 & 0 \\ b & 1 \end{pmatrix}  \, .
\]
The latter maps are given as $a$ horizontal respectively $b$ vertical Dehn-twists of the torus. The precise relationship is via the claimed continued fraction expansion as shown in the reference op. cit. A full Dehn-twist of the torus induces a half-twist in the pillowcase quotient.
\end{proof}

\smallskip 
Concrete examples are given below. First, two general remarks are in order.

\begin{remark}
\begin{enumerate}
\item[(i)]
If $\abs{p/q} < 1$, and $n$ is even, then the role of the rational tangle inside the pillowcase and outside of it are interchanged.
\item[(ii)]
The continued fraction expansion is not unique, and in particular it is of no restriction to the generality to assume that $n$ is odd in the preceding statement. See the  remark at the end of the proof of Theorem 1 in \cite{Montesinos_Orsay}.
\end{enumerate}
\end{remark}
\smallskip 

\begin{example}
The tangle $\tau(T(16,5))$ is given by the Figure 
\ref{pillowcase_t165}
 below. Here the quotient of the tubular neighborhood of the torus knot $T(16,5)$ winds around the pillowcase. The quotient of a Seifert fibre is shown by the green curve. An isotopy brings this into the diagram shown in Figure 6
\ref{tangle_t165} 
 which has the shape as stated in Thoerem \ref{tangle_standard_diagram} above. In fact, we have 
\[
	\frac{16}{5} = 3 + \frac{1}{5} \, ,
\]
and the isotopy is given by first applying 3 `horizontal half-twists', fixing the lower side of the pillowcase and rotating the upper (thereby not changing the tangle inside the pillowcase), and then applying 5 `vertical half-twists', fixing the left side of the pillowcase, and rotating the right one.
\end{example}
\begin{figure}[h!]
\caption{The tangle $\tau(T(16,5))$ with the complementary ball wrapped around the pillowcase}
\label{pillowcase_t165}
\def\svgwidth{0.7\columnwidth}
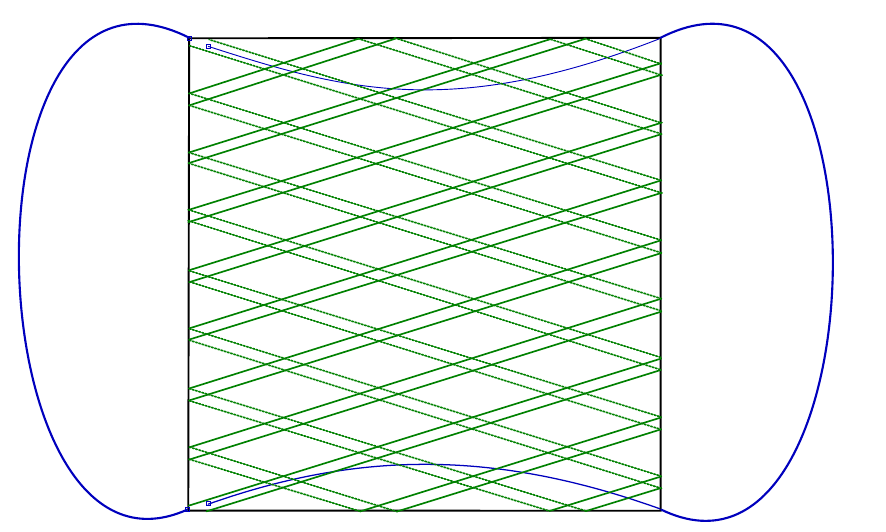
\end{figure}

\begin{figure}[hbt]
\caption{The tangle $\tau(T(16,5))$ depicted as stated in Theorem \ref{tangle_standard_diagram}. The complementary ball is isotoped into the lower end of the pillowcase.}
\label{tangle_t165}
\def\svgwidth{0.5\columnwidth}
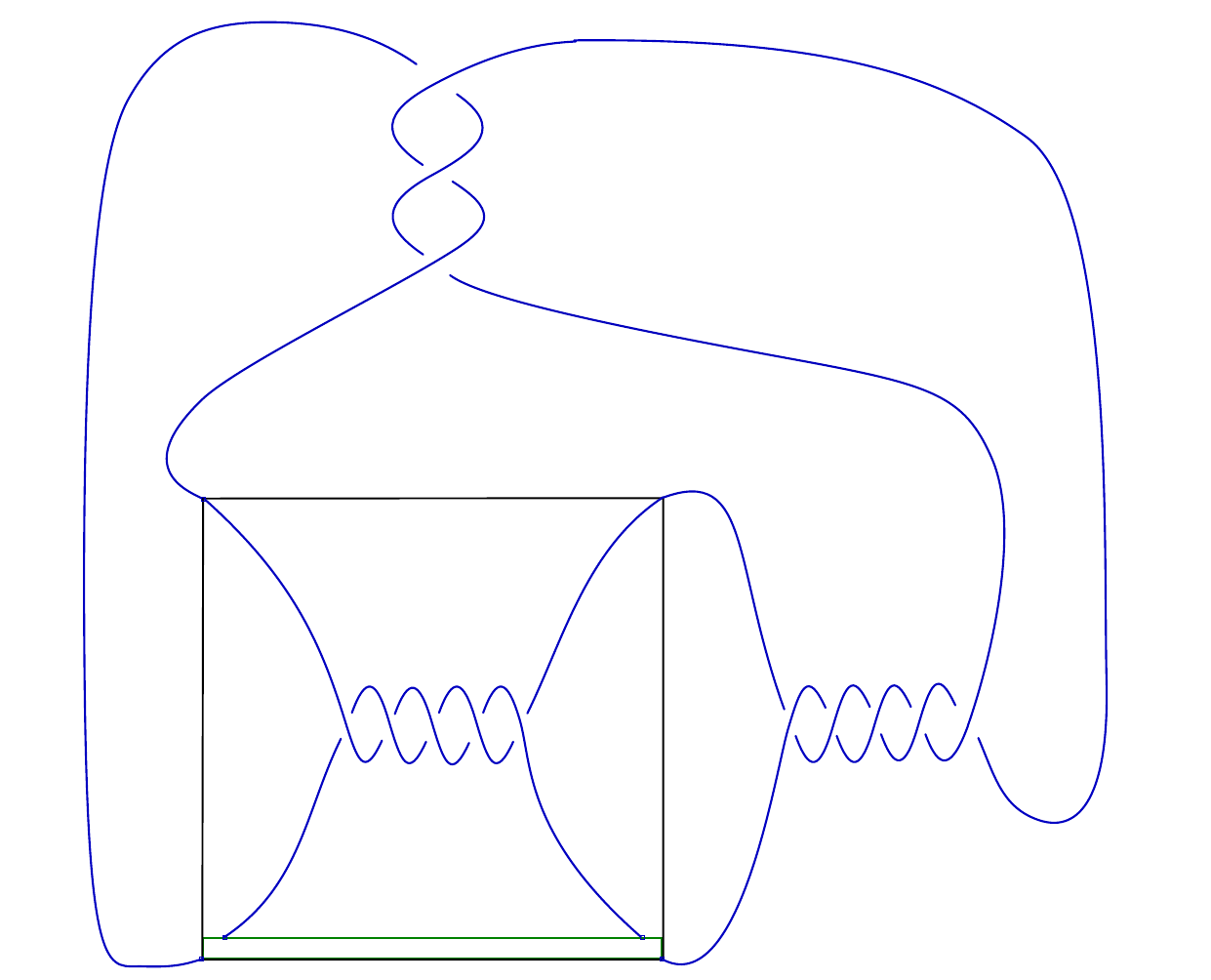
\end{figure}

\begin{example}
A general tangle $\tau(T(p,q))$ has a diagram given by Figure \ref{tanglesum_example}
if the length of the continued fraction expansion of $p/q$ is 4, $p/q = [a_0, \dots, a_3]$. 
\end{example}

\begin{figure}[h!t]
\caption{An example with length 4 in the continued fraction expansion.}
\label{tanglesum_example}
\def\svgwidth{0.7\columnwidth}
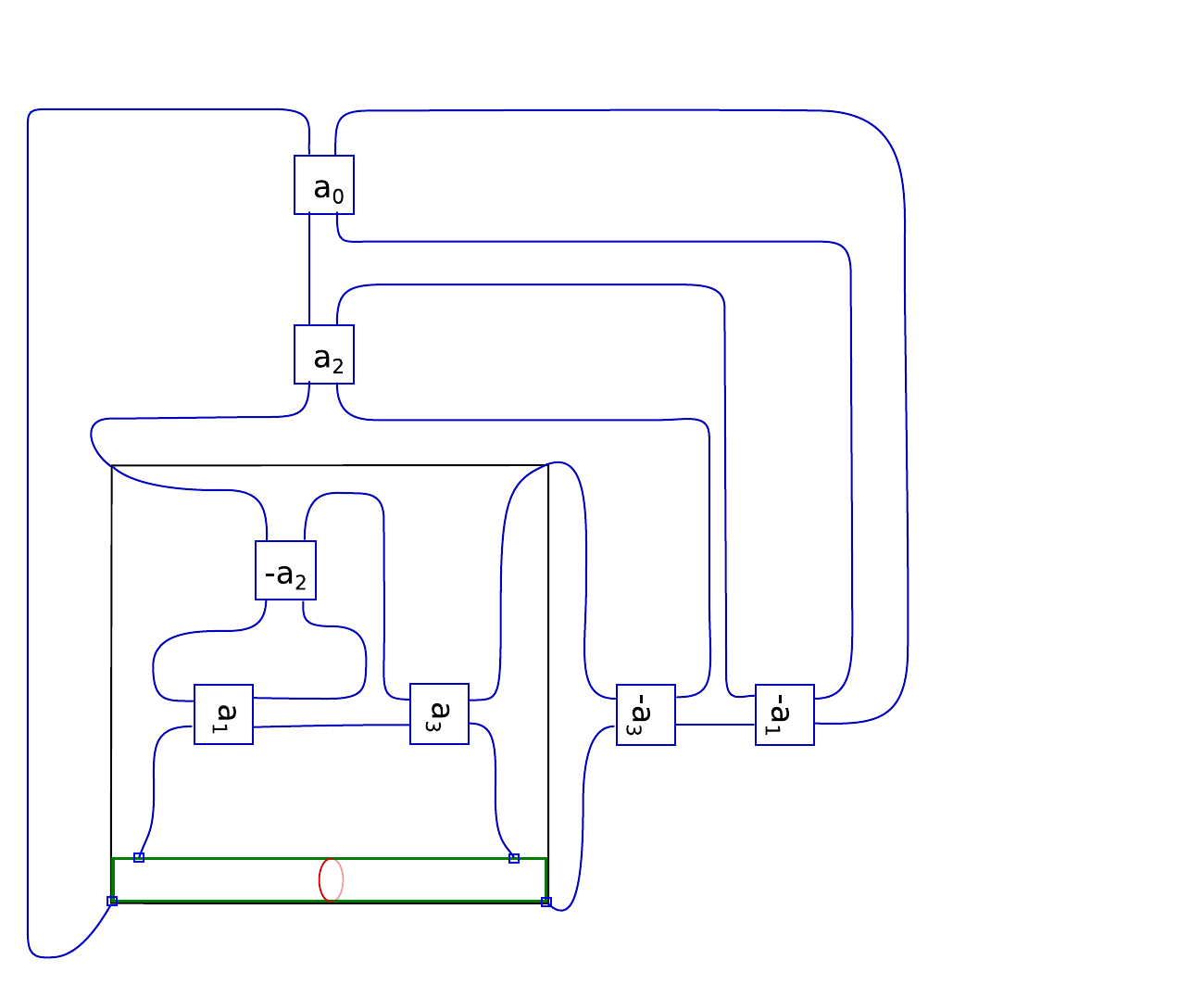
\end{figure}

\vspace{1cm}
%\newpage

In Figure \ref{alternating1} below the Conway sphere giving rise to the essential torus in the branched double cover is indicated by a grey dotted circle. A Conway sphere which can be indicated in such a way (by an embedded circle in the plane cutting the link diagram in four points) is called `visible' in the diagram in the terminology of Thistlethwaite, see \cite{Thistlethwaite}. The Conway sphere indicated by the dotted line in Figure \ref{alternating3} below is called `hidden' in Thistlethwaite's terminology.

\begin{theorem}\label{alternatingness}
	The knots $L(T(p,q),T(r,s))$ are alternating. The Conway sphere giving rise to the essential torus in the branched double cover is not visible in any alternating diagram of the link. 

\end{theorem}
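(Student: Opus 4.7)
My plan is to split the theorem into its two assertions and treat each directly.

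For alternatingness, I would start from the explicit tangle diagrams given by Theorem \ref{tangle_standard_diagram}: each $\tau(T(p,q))$ is a sum of two rational tangles of opposite sign, placed respectively inside and outside the pillowcase. The gluing map $\phi$ defining $L(T(p,q),T(r,s))$ exchanges meridians and Seifert fibres on the boundary, which in the pillowcase model corresponds to a ninety-degree rotation. Rotating the pillowcase of $\tau(T(r,s))$ by ninety degrees before gluing interchanges its inside and outside rational tangles, and by suitably choosing the continued fraction expansions of $p/q$ and $r/s$ (for instance using the standard positive algorithm), the opposite-sign obstruction within each individual tangle cancels against its partner on the other side: the crossings of the inside tangle of one side line up with those of the now-rotated outside tangle of the other, and the entire combined diagram alternates. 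The work here is a careful bookkeeping of crossings under a ninety-degree rotation, similar in spirit to the analysis of Montesinos links.

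For the hidden Conway sphere claim, I would argue by contradiction. Suppose some alternating diagram $D$ of $L(T(p,q),T(r,s))$ makes the essential Conway sphere $S$ visible, so that $D$ decomposes as a sum of two alternating two-string tangles $T_1, T_2$ whose branched double covers are $Y(T(p,q))$ and $Y(T(r,s))$ respectively. Then $T_1$ and $T_2$ are alternating tangles whose double covers are torus knot complements with non-trivial Seifert fibration. Using either the Menasco--Thistlethwaite classification of essential surfaces in alternating diagrams or a direct analysis of the boundary slopes realised by alternating tangles, one can constrain the form of such $T_i$. The key obstruction should be that the identification prescribed by $\phi$ (pairing a Seifert fibre of slope $pq$ with a meridian, and vice versa) cannot be realised when $p,q,r,s$ are all non-trivial, because the boundary parametrisations inherited by branched double covers of alternating two-string tangles impose rigid constraints on the slopes visible at the four endpoints on $S$.

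The main obstacle is the universal nature of the hidden statement: it is not enough to verify non-visibility in the specific diagram produced in part one, one must rule it out in every alternating diagram. By the Tait flyping theorem of Menasco--Thistlethwaite any two alternating diagrams are connected by flypes, so in principle one reduces to a finite check, but flypes can in general move a visible Conway sphere to a hidden one and vice versa. The real work will therefore be to extract an intrinsic invariant of the pair (knot, essential Conway sphere), perhaps encoded in how the JSJ decomposition of the branched double cover meets the checkerboard surfaces of any alternating diagram, whose vanishing obstructs visibility of $S$. The sign bookkeeping needed for the explicit construction in part one is tedious but routine; the genuine difficulty lies in this universal obstruction argument.
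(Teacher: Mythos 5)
Your treatment of the first assertion is in the right spirit --- the paper likewise starts from the explicit diagrams of Theorem \ref{tangle_standard_diagram} --- but the mechanism differs and yours is left unverified. In the paper the initial (non-alternating) diagram consists of four rational tangles, each individually alternating because one can choose a continued fraction expansion of $p/q$ (resp.\ $r/s$) with all partial quotients of the same sign; the passage to an alternating diagram is then achieved by two concrete strand isotopies, pulling one arc over the lower-right rational tangle and another arc under the upper-right one, which removes two crossings and makes the whole diagram alternate. Your proposed ninety-degree rotation with a sign cancellation is plausible, but you have not carried out the crossing bookkeeping, and that bookkeeping is where the content of this half of the theorem lies; this part is an incompleteness rather than a wrong turn.

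The genuine gap is in the second assertion. You correctly identify that the difficulty is the universal quantification over all alternating diagrams, but the tool that resolves it already exists and your proposal does not find it: Menasco's theorem \cite{Menasco} that an essential Conway sphere of an alternating link is isotopic to one that is either visible or hidden in a given alternating diagram, combined with Thistlethwaite's theorem \cite{Thistlethwaite} that hiddenness is a property of the link itself --- if a Conway sphere is hidden in one alternating diagram, it is hidden in every alternating diagram. Given these, it suffices to exhibit a single alternating diagram in which the Conway sphere is hidden, and that is exactly what the explicit diagram produced in the first part provides. Your stated worry that ``flypes can in general move a visible Conway sphere to a hidden one and vice versa'' is precisely what Thistlethwaite's result rules out, and the ``intrinsic invariant'' you hope to extract is supplied by it. The contradiction argument you sketch instead (constraining boundary slopes of alternating tangles whose branched double covers are Seifert fibred) is speculative, is not carried out, and is not needed.
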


\begin{proof}
We consider the case where $p/q$ and $r/s$ both have the same sign. The other case is similar and is left as an exercise.

\begin{figure}[hbt]
\caption{The initial diagram. The grey dotted line indicates the Conway sphere.}
\label{alternating1}
\def\svgwidth{0.7\columnwidth}
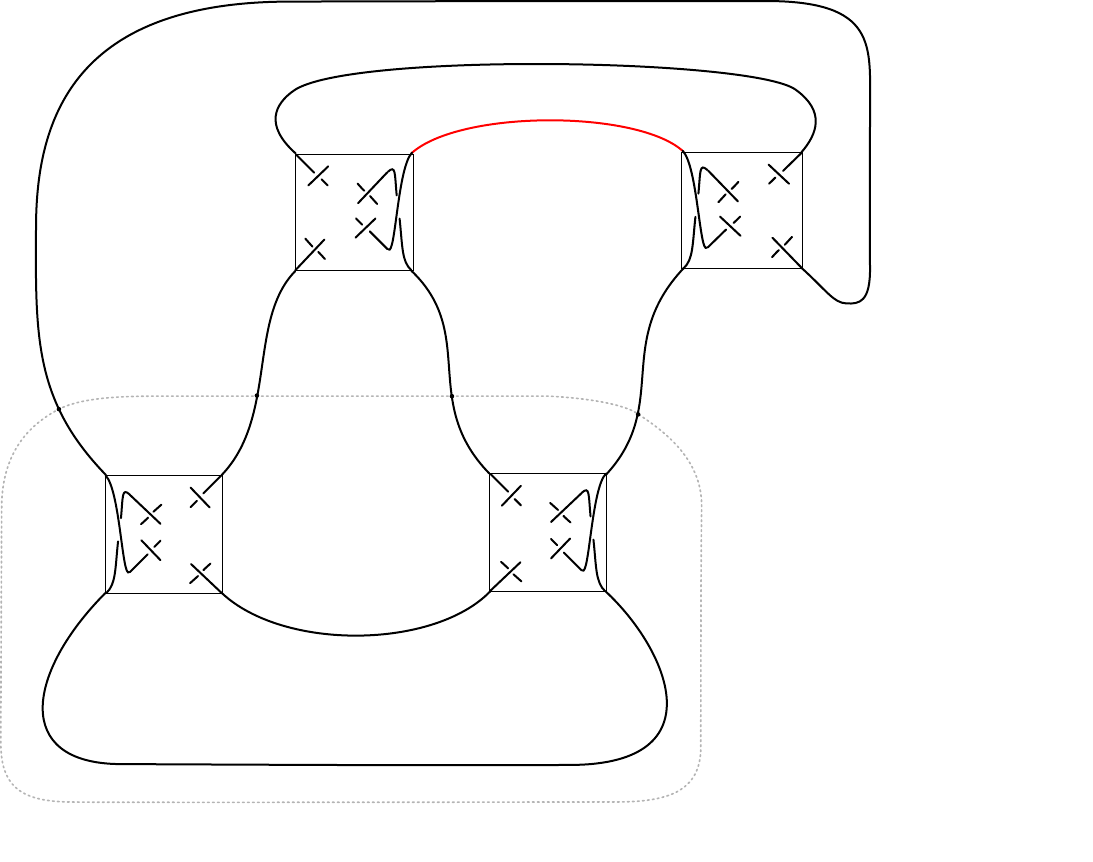
\end{figure}

By Theorem \ref{tangle_standard_diagram} above the knot $L(T(p,q),T(r,s))$ has a diagram given by Figure \ref{alternating1}. Here we have only indicated the information relevant to alternatingness. In particular, each square contains a rational tangle with an alternating diagram, because there is a continued fraction expansion $p/q = [a_0, \dots, a_n]$ with either $a_i > 0$ for $ i=0, \dots, n$ or $a_i < 0$ for $ i=0, \dots, n$, and likewise for $r/s$. It may be drawn such that all overcrossings go either `from upper left to lower right' or `from lower left to upper right'. In each of these rational tangles, some indicated crossings may be identical. For instance, in the upper right square, the two strands entering from the upper right and lower right side may have their first crossing together in the same way as the two strands entering from the upper left and lower left side.

\begin{figure}[h!]
\caption{An intermediate diagram.}
\label{alternating2}
\def\svgwidth{0.7\columnwidth}
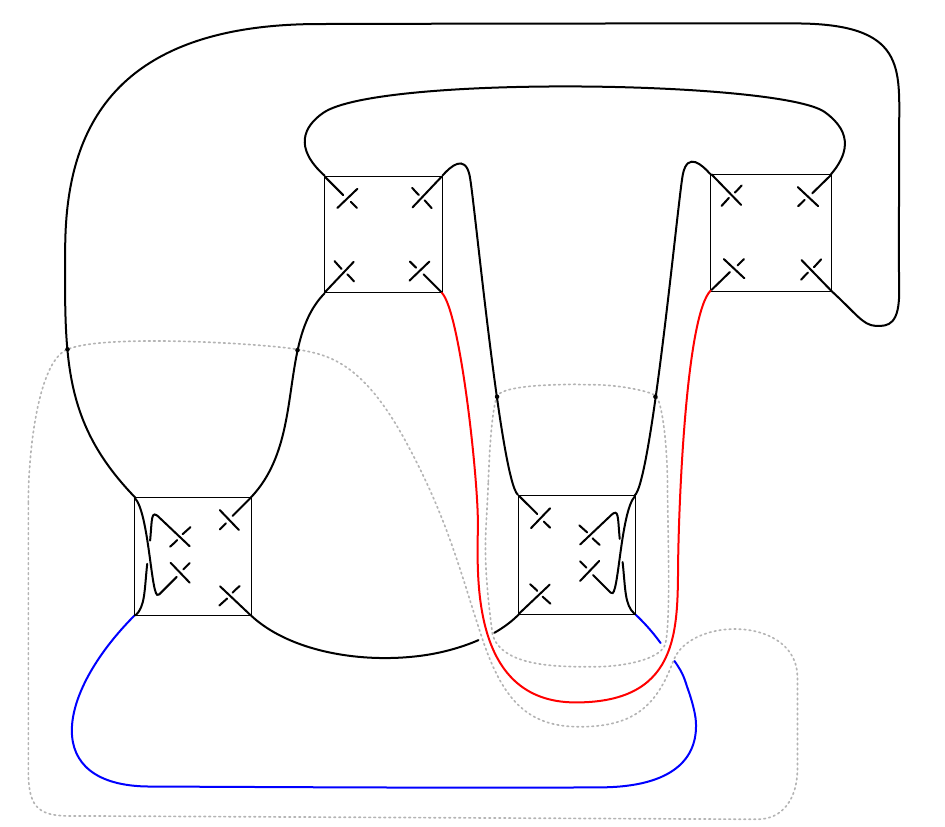
\end{figure}

A first isotopy takes the red strand and pulls it over the lower right square. This yields the diagram in Figure \ref{alternating2}. It still has the same number of crossings as the initial diagram. 

A second isotopy takes the blue strand and pulls it underneath the upper right square. This yields an alternating diagram with two crossings less as the initial diagram, depicted in Figure \ref{alternating3}.

\begin{figure}[h!]
\caption{The final alternating diagram. The Conway sphere is hidden.}
\label{alternating3}
\def\svgwidth{0.7\columnwidth}
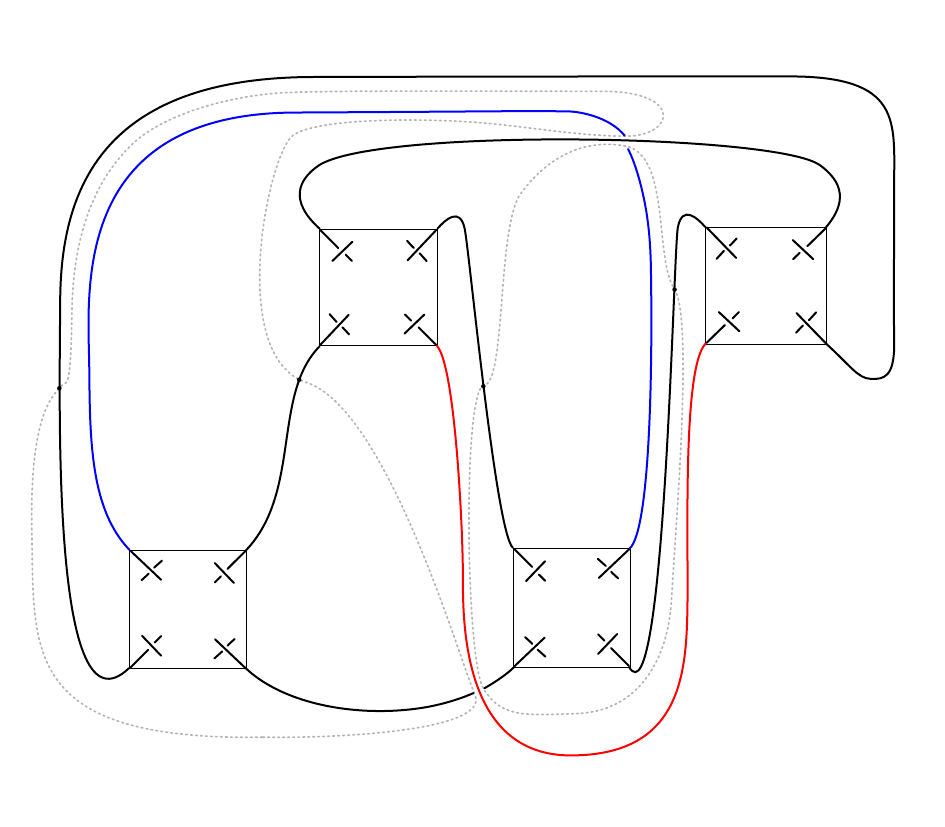
\end{figure}

In Figure \ref{alternating1} the Conway sphere giving rise to the essential torus in the branched double cover indicated by the grey dotted line is visible.
It is isotopic to the Conway spheres indicated in Figure \ref{alternating2} and Figure \ref{alternating3}. The Conway sphere in the last figure is  `hidden'. 

Menasco \cite{Menasco} has shown that a Conway sphere in an alternating diagram is isotopic to one that is either visible or hidden. Building on this, Thistlethwaite \cite{Thistlethwaite} has shown that this is a characteristic property of the link in the following sense: If a Conway sphere is hidden in an alternating diagram of a link, then it is hidden in any alternating diagram of the link. 
\end{proof}

\begin{remark}
	It seems that the Conway sphere lifting to the incompressible torus in the branched double cover $Y(T(p,q),T(r,s)) = \Sigma_2(L(T(p,q),T(r,s)))$ cannot be seen in the `obvious way' in the alternating diagram of $L(T(p,q),T(r,s))$ that we have described. By the `obvious way' we mean that it cannot be described by a circle in the diagram that cuts the diagram in precisely four points, as is the case, for instance, for the Conway-knot and the Kinoshito-Terasaka knot. 
\end{remark}

\begin{remark}
	It is clear that the above proof applies to more general knots containing four rational tangles. This could be of possible interest to the methods used in \cite{Greene_Levine}. 
\end{remark}

\section{Relevance to instanton knot Floer homology}
The instanton knot Floer homology $I^\natural(K)$ of a knot $K$, as defined by Kronheimer and Mrowka in \cite{KM_ss}, has close ties with the representation varieties 
$R(K,SU(2);\i)$ considered in this paper. For the precise setup we refer to this reference. In the following, we shall only review that part of the construction which is relevant to our work on representation varieties.

Associated to $K$ there is a 2-component link $K^\natural$, consisting of $K$ together with the boundary $\mu$ of a meridional disc, and an arc $\omega$ connecting these two components, see \cite[Figure 3]{KM_ss}. There is an associated space of representations
\[
	R^\natural(K) = \{ \rho \in \Hom(\pi_1(S^3 \setminus (K^\natural \cup \omega)), SU(2)) | \rho(m_\mu) \sim \i, \rho(m_K) \sim \i, \rho(m_\omega) = -1 \} , 
\]
where $m_K, m_\mu$,and $m_\omega$ denote small meridians in $S^3 \setminus (K^\natural \cup \omega)$ of $K, \mu$, and the arc $\omega$. There is a map induced by restriction 
\begin{equation} \label{two representation spaces}
	R^\natural(K)/SU(2) \to R(K,\i)/SU(2)  , 
\end{equation}
where the action of $SU(2)$ on both sides is by conjugation.
\begin{prop}\cite[Proposition 4.3]{HHK}\label{map_rep_spaces}
The map (\ref{two representation spaces}) is surjective. The preimage of a conjugacy class of a non-abelian representation is a circle of non-abelian representations, and the preimage of the conjugacy class of the abelian representation is a point. Furthermore, $R^\natural(K)$ consists only of non-abelian representations.
\end{prop}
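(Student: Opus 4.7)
My plan is to read off a Wirtinger-style presentation of $G^\natural := \pi_1(S^3 \setminus (K^\natural \cup \omega))$ from the spatial theta-graph $K \cup \mu \cup \omega$ and then exploit the centrality of $\tilde\rho(m_\omega) = -1$ in $SU(2)$ to reduce the extension problem to a concrete anticommutation condition. The graph has two trivalent vertices, at the endpoints of $\omega$; at each vertex the presentation contributes one relation of the shape (product of the three adjacent meridians, with signs) $= 1$, in addition to the usual Wirtinger crossing relations inherited from $G := \pi_1(S^3 \setminus K)$ and from the unknotted curve $\mu$.

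Given $\rho \in R(K, \i)$, I would extend it to $\tilde\rho \in R^\natural(K)$ by setting $\tilde\rho|_G = \rho$, $\tilde\rho(m_\omega) = -1$, and choosing $\tilde\rho(m_\mu)$ subject to the vertex relations; every other generator of $G^\natural$ is then pinned down. With $\tilde\rho(m_\omega) = -1$ central, the vertex relation at the endpoint of $\omega$ on $K$ collapses to an identity tautologically satisfied by $\rho$, while the vertex relation at the endpoint on $\mu$ becomes the requirement that $\tilde\rho(m_\mu)$ anticommute with $\rho(\mu)$ in the quaternion algebra --- the minus sign being contributed by $\tilde\rho(m_\omega) = -1$. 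Since $\mu$ is a meridian of $K$ in $G$, the element $\rho(\mu)$ is determined by $\rho$ and is traceless. The set of traceless unit quaternions anticommuting with $\rho(\mu)$ is precisely the great circle orthogonal to $\rho(\mu)$ in the $3$-space of imaginary quaternions; in particular it is non-empty, so the restriction map $R^\natural(K) \to R(K, \i)$ is surjective, and its fiber over $\rho$ is an $S^1$.

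Passing to $SU(2)$-conjugacy: for non-abelian $\rho$ the stabilizer is $\{\pm 1\}$ by irreducibility, which acts trivially on the fiber $S^1$, so the fiber of the induced map over $[\rho]$ is still a circle. For the abelian representative, $\rho$ is stabilized by the full maximal torus containing $\rho(m_K)$, and this torus acts by rotation on the great circle orthogonal to $\rho(\mu)$ transitively, collapsing the fiber to a point. Finally, every $\tilde\rho \in R^\natural(K)$ is non-abelian: $\tilde\rho(m_\mu)$ and $\tilde\rho(m_K)$ are both traceless and anticommute (using that $\rho(\mu)$ is conjugate to $\rho(m_K)$ in $G$), and two anticommuting traceless unit quaternions generate a copy of the quaternion group $Q_8$ inside $SU(2)$, which is non-abelian.

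The main obstacle I anticipate is the correct derivation of the vertex relation at the $\mu$-endpoint, and in particular its sign --- verifying that, with the standard orientation conventions, substituting $\tilde\rho(m_\omega) = -1$ produces anticommutation of $\tilde\rho(m_\mu)$ and $\rho(\mu)$ rather than commutation. Once that combinatorial input is fixed, the remainder of the argument is a direct computation in the quaternions together with the standard stabilizer analysis separating irreducible from abelian representations.
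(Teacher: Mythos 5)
The paper does not actually prove this statement; it is quoted verbatim from Hedden--Herald--Kirk \cite[Proposition 4.3]{HHK}, so there is no in-paper argument to compare against. Your strategy is essentially the one used in that reference: identify $R^\natural(K)$ with pairs $(\rho, C)$ where $C = \tilde\rho(m_\mu)$ is a traceless element anticommuting with $\rho(\mu)$, observe that the set of such $C$ is the great circle of imaginary unit quaternions orthogonal to $\rho(\mu)$, and then run the stabilizer analysis (centre $\{\pm 1\}$ acting trivially in the irreducible case, the full maximal torus acting transitively by rotation in the abelian case). Granting the anticommutation characterization, everything downstream in your write-up --- surjectivity, circle fibres over irreducibles, point fibre over the abelian class, and non-abelianness of $R^\natural(K)$ via the $Q_8$ generated by two anticommuting traceless units --- is correct.

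The gap is exactly where you flag it, but it is worse than a sign to be checked: your provisional account of the two vertex relations does not close up. With $\tilde\rho(m_\omega) = -1$ central, the trivalent vertex on $K$ forces the meridians of the two arcs of $K$ adjacent to that vertex to map to \emph{negatives} of each other (the relation is a product $x_1 x_3^{\pm 1} x_2^{-1} = 1$, not a conjugation), so this relation is not ``tautologically satisfied by $\rho$''; and if you then try to define $\tilde\rho$ on the remaining $K$-meridians by propagating $\rho$ through the Wirtinger crossing relations, the sign flip travels once around the knot and appears to contradict itself at the starting arc. Resolving this requires being precise about how the map $R^\natural(K) \to R(K;\i)$ is actually defined (restriction to the complement of a ball containing the earring $\mu \cup \omega$, whose fundamental group is $\pi_1(S^3 \setminus K)$, rather than naive restriction along a quotient of groups), and the anticommutation relation is most cleanly extracted not from a vertex relation but from the peripheral torus of $\mu$ punctured once by $\omega$, which yields $m_\mu\, \mu\, m_\mu^{-1} \mu^{-1} = m_\omega^{\pm 1}$ and hence $\tilde\rho(m_\mu)\,\rho(\mu)\,\tilde\rho(m_\mu)^{-1} = -\rho(\mu)$ directly. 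Until that combinatorial input is pinned down, the surjectivity claim and the identification of the fibre are asserted rather than proved; this is precisely the content of the cited result in \cite{HHK}, so as it stands your argument reproduces the reduction but still leans on the reference for its key step.
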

The reduced instanton Floer homology $I^\natural(K)$ is a $\Z/4$-graded abelian group associated to a knot or link $K$. 
It is the Morse homology in an appropriate sense of a function $\text{\em CS}$ defined on a certain space of connections with prescribed singularities around $K^\natural \cup \omega$, and whose critical manifold consists of certain flat connections and is identified with $R^\natural(K)/SU(2)$, the identification being determined by the holonomy representation as usual in such a setup. For non-trivial knots, the critical manifold always has positive dimensional components. 

Therefore, the function $\text{\em CS}$ has to be perturbed to a function $\text{\em CS}_\pi$ with the desired properties: The resulting critical manifold $R^\natural_\pi(K)/SU(2)$ is a finite set of points, and the moduli space of flow-lines in this setup satisfies a certain transversality condition analogous to the Morse-Smale condition in the classical setup of Morse homology. That both can be achieved is proved in \cite{KM_ss,KM_knots}, using suitable holonomy perturbations. Such holonomy perturbations are called {\em admissible}. Instanton Floer homology is then the homology of a $\Z/4$-graded complex $CI^\natural_\pi(K)$ with generators given by the points of $R^\natural_\pi(K)/SU(2)$, for an admissible perturbation $\pi$. 
\\

Provided $R^\natural(K)/SU(2)$ is already non-degenerate in the Morse-Bott sense, one expects that it is possible to perturb in such a way that every circle of $R^\natural(K)/SU(2)$ is replaced by exactly two critical points in $R^\natural_\pi(K)/SU(2)$, and that the `abelian point' in $R^\natural(K)/SU(2)$ corresponds to a unique point in $R^\natural_\pi(K)/SU(2)$. 

Kronheimer and Mrowka have given a cohomological criterion to when a representation of $R^\natural(K)/SU(2)$ is non-degenerate in the Morse-Bott sense. In fact, a representation $\rho \in R^\natural(K)$ is non-degenerate if and only if the restriction map
\begin{equation}\label{morse-bott}
	H^1(Y \setminus K^\natural; \su(2)_\rho) \to H^1(m_K; \su(2)_\rho) \oplus H^1(m_{\mu};\su(2)_\rho)
\end{equation}
has trivial kernel, see \cite[Lemma 3.13]{KM_knots}. Here the cohomology groups are considered to be with twisted coefficients, and where $\su(2)$ becomes a $\pi_1$-module via the representation $\rho$. The description in Proposition \ref{map_rep_spaces} is very explicit. In fact, an irreducible representation $\rho \in R(K,\i)$ gives rise to a circle of representations in $R^\natural(K)$ which map the meridian $\rho(m_\mu)$ to any element orthogonal to $\rho(m_K) \sim \i$ and of norm $1$ in the purely imaginary quaternionians (there is a circle worth of these, and if $\rho(m_K) = \i$, then this circle is the unit circle in the plane spanned by $\j, \k$). 

With this description, and with a Mayer-Vietoris argument one can see that an irreducible representation $\rho \in R(K,\i)$ gives rise to a circle in $R^\natural(K)/SU(2)$ satisfying the Morse-Bott non-degeneracy assumption if and only if the twisted cohomology group $H^1(Y \setminus K; \su(2)_\rho)$ is one-dimensional, and the restriction map 
\begin{equation}\label{morse-bott-classic}
	H^1(Y \setminus K; \su(2)_\rho) \to H^1(m_K; \su(2)_\rho)
\end{equation}
is onto. Intuitively this condition means the following: Thinking of the restriction map
\[
\Hom(\pi_1(Y(K)),SU(2))/SU(2) \to \Hom (\pi_1(\partial Y(K)),SU(2))/SU(2) =:P \, ,
\]
where $P$ is the 2-dimensional pillowcase, one can deform the representation $\rho$ (inside the variety of all representations, not requiring to send the meridian to an element of trace 0) in such a way that this yields to a deformation inside $P$ which is transverse to the section of elements required to send the meridian to elements of trace 0. (This is where the image of $R(K,\i)$ maps to inside $P$).

\begin{hypothesis}\label{hypothesis regularity}
Suppose the knot $K$ is such that $R(K,\i)/SU(2)$ contains exactly $n$ conjugacy classes of irreducible representations (and necessarily a single conjugacy class of reducible representations) each having one-dimensional twisted cohomology group $H^1(Y \setminus K; \su(2)_\rho)$, and such that the map (\ref{morse-bott-classic}) is onto. 
Then there is a sufficiently small admissible holonomy perturbation $\pi$ of the Chern-Simons function such that $R^\natural_\pi(K)/SU(2)$ contains exactly $2n+1$ points. As a consequence, the associated instanton Floer chain complex $CI^\natural_\pi(K)$ is of total rank $2n+1$. 
\end{hypothesis}

The point of this hypothesis is that the perturbation gives rise to precisely two critical points out of each circle satisfying the Morse-Bott condition. The non-degeneracy assumption is an open condition, so for sufficiently small perturbations the critical points will still be non-degenerate, from which the claim about the Floer chain complex follows. 

Hedden, Herald and Kirk have proved in \cite{HHK} that this hypothesis is satisfied for all 2-bridge knots, torus knots, and certain other classes of knots. Further examples have been studied by Fukumoto, Kirk and Pinz\'on-Caicedo in \cite{FKP}. We expect that our knots $L(T(p,q),T(r,s)$ also satisfy this hypothesis. We plan to come back to this in forthcoming work.
\\

The relationship of instanton knot Floer homology to the Alexander polynomial, established independently by  Kronheimer and Mrowka in \cite{KM_Alex} and Lim in \cite{Lim}, gives the lower bound $\det(K) = \abs{\Delta_K(-1)}$ to the rank of instanton Floer homology, so that Hypothesis \ref{hypothesis regularity} yields the following

\begin{prop}
If a knot $K$ is $SU(2)$-simple and satisfies the genericity hypothesis \ref{hypothesis regularity} above, then its instanton Floer chain complex $CI^\natural_\pi(K)$ has no non-zero differentials and is of total rank the $\det(K)$. In particular, the total rank of reduced instanton knot Floer homology $I^\natural(K)$ is also equal to $\det(K)$. 
\end{prop}
\begin{proof}
For a knot $K$, the number of conjugacy classes of binary dihedral representations in $R(K;\i)$ is equal to 
	$(\det(K)-1)/2$, by Klassen's theorem \cite{Klassen}. Therefore, an $SU(2)$-simple knot $K$ satisfying the genericity hypothesis has instanton Floer chain complex of rank $\det(K)$. For a knot $K$, instanton knot Floer homology $I^\natural(K)$ is isomorphic to a different flavor of instanton knot Floer homology $\text{\em KHI}(K)$ as defined in \cite{KM_sutures}. The latter categorifies the Alexander polynomial $\Delta_K(t)$ by the results in \cite{KM_Alex,Lim}. In particular, the rank of instanton Floer homology has to be greater or equal than the determinant $\det(K) = \abs{\Delta_K(-1)}$. Therefore, the complex $CI^\natural_\pi(K)$ cannot have a non-zero differential.
\end{proof}

For the class of $SU(2)$-simple knots of the form $L(T(p,q),T(r,s))$ defined in this paper we obtain the total rank of reduced instanton knot Floer homology without having to be concerned about whether the genericity hypothesis \ref{hypothesis regularity} is satisfied. One makes use of the Kronheimer-Mrowka spectral sequence which must be degenerate because $L(T(p,q),T(r,s))$ is alternating, see \cite{KM_ss}. 

%\begin{prop}
%	The total rank of reduced instanton knot Floer homology we have: 
%	\[
%	I^\natural(L(T(p,q),T(r,s))) \cong \Z^{\abs{pqrs-1}} \, .
%	\]
%	Its $\Z/4$-grading is determined from the Jones polynomial and the knot signature of $L(T(p,q),T(r,s))$.
%\end{prop}
%\begin{proof}
%Theorem \ref{alternatingness} together with the Kronheimer-Mrowka spectral sequence \cite{KM_ss} readily determines the total rank of reduced instanton knot Floer homology, since the Kronheimer-Mrowka spectral sequence must degenerate at the page isomorphic to Khovanov homology in this case. The Khovanov homology determines the $\Z/4$ grading, see \cite{KM_ss}. The Khovanov homology in turn is determined by the Jones polynomial and the knot signature, see Lee's theorem \cite{Lee}. Its total rank is equal to the knot determinant, which is equal to $\abs{pqrs-1}$ in this case.
%\end{proof}

\section{$SU(2)$-simple knots and Heegaard-Floer homology strong L-spaces}
The simplest version of Heegaard-Floer homology is the `hat'-version \cite{OzSz1,OzSz2}, introduced by Ozsv\'ath and Szab\'o. It is an abelian group $\widehat{\text{\em HF}}(Y)$, associated to a rational homology sphere $Y$. 
Such a manifold $Y$ is called a Heegaard-Floer homology L-space if the  $\widehat{\text{\em HF}}(Y)$ is of rank equal to the cardinality of $H_1(Y;\Z)$. As the latter number is always a lower bound to the rank of $\widehat{\text{\em HF}}(Y)$, one can say that an L-space has Heegaard-Floer-hat homology as small as it can possibly be. A rational homology sphere is called a Heegaard-Floer homology {\em strong} L-space if the rank of the complex $\widehat{\text{\em CF}}(Y)$ computing $\widehat{\text{\em HF}}(Y)$ is equal to the cardinality of $H_1(Y;\Z)$. Such a complex has no non-trivial differentials. An example of a 3-manifold which is an L-space, but not a strong L-space is the Poincar\'e homology sphere. 

A strong L-space $Y$ is particularly simple with respect to Heegaard-Floer-hat homology. It has a formal similarity to a $SU(2)$-simple knot $K$ satisfying the genericity assumption \ref{hypothesis regularity}. However, this formal resemblance is reflected topologically for the class of $SU(2)$-simple knots studied here:
\\

\begin{prop}
	The 3-manifolds $Y(T(p,q),T(r,s)) = \Sigma_2(L(T(p,q),T(r,s)))$ are Heegaard-Floer homology strong L-spaces.
\end{prop}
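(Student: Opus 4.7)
The plan is to combine Theorem \ref{alternatingness} with the theorem of Ozsv\'ath and Szab\'o on branched double covers of alternating links. By Proposition \ref{double cover}, the 3-manifold $Y(T(p,q),T(r,s))$ is the branched double cover of the link $L := L(T(p,q),T(r,s))$, and by Theorem \ref{alternatingness} the link $L$ is alternating.

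The essential input is the following result of Ozsv\'ath and Szab\'o: for any connected alternating projection of a link $L$ in $S^3$, the set of Kauffman states associated with the projection naturally furnishes a Heegaard diagram for $\Sigma_2(L)$ whose chain complex $\widehat{\text{\em CF}}(\Sigma_2(L))$ has precisely $\det(L)$ generators, each lying in a distinct $\Spinc$-structure. Since generators that lie in different $\Spinc$-structures cannot be joined by any differential, this chain complex carries no non-trivial differentials. Its rank therefore equals $\det(L) = |H_1(\Sigma_2(L);\Z)|$, which is exactly the defining condition for a strong L-space in the sense recalled above.

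Applying this to our situation, and using Proposition \ref{presentation} to identify $|H_1(Y(T(p,q),T(r,s));\Z)| = |pqrs-1| = \det(L(T(p,q),T(r,s)))$, the strong L-space property follows immediately. I do not expect any real obstacle once Theorem \ref{alternatingness} is in hand; the substantive geometric content has already been extracted in the proof of alternatedness. The only point worth a brief remark is that when $pqrs - 1$ is even (equivalently, when $p,q,r,s$ are all odd), $L$ is a two-component link rather than a knot, but the Ozsv\'ath--Szab\'o theorem is formulated for alternating links in either case, so the distinction plays no role in the argument.
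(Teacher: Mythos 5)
Your proof is correct and follows essentially the same route as the paper: alternatingness of $L(T(p,q),T(r,s))$ from Theorem \ref{alternatingness}, combined with the fact that a connected alternating diagram yields a Heegaard diagram for the branched double cover whose chain complex has exactly $\det(L)=\abs{H_1(\Sigma_2(L);\Z)}$ generators. The only correction is one of attribution: the strong L-space statement you invoke (the Kauffman-state/spanning-tree Heegaard diagram) is Greene's theorem \cite{Greene}, which is what the paper cites; Ozsv\'ath--Szab\'o proved the weaker statement that branched double covers of alternating links are L-spaces.
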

\begin{proof}
By Theorem \ref{alternatingness} the knots $L(T(p,q),T(r,s))$ are alternating. Their branched double covers $Y(T(p,q),T(r,s))$ are therefore Heegaard-Floer homology strong L-spaces by a result of Greene \cite{Greene}. 
\end{proof}

\begin{question}
  Is the branched double cover $\Sigma_2(K)$ of a $SU(2)$-simple knot $K$ always a strong L-space? Is every $SU(2)$-simple knot alternating? 
\end{question}
\begin{remark}  
The converse of the relationship asked in this question is not true, however. In fact, all pretzel knots $P(p,q,r)$ with $p,q,r > 1$ are alternating and therefore $\Sigma_2(P(p,q,r))$ are strong L-spaces \cite{Greene}, but these knots are not $SU(2)$-simple as was shown in \cite{Z}. In fact, these knots possess many representations in $R(P(p,q,r);\i)$ which are not binary dihedral. 
\end{remark}

%As a consistency check, one may realise that the weaker statement that the $Y(T(p,q),T(r,s))$ are Heegaard-Floer homology L-spaces follows easily from 
%Hanselman's result \cite[Theorem 2]{Hanselman}. In his notation,  our 3-manifold $Y(T(p,q),T(r,s))$ is denoted $Y(T(p,q)^{[pq]}, T(r,s)^{[rs]})$. Torus knots are L-space knots which means that they admit non-trivial surgeries which are L-spaces. In fact, torus knots admit a lot of Lens space surgeries, see \cite{Moser}. The last condition in Hanselman's theorem is fulfilled because the slope $pq$ is strictly bigger than twice the Heegaard-Floer concordance invariant of $T(p,q)$ if it is positive, and strictly smaller if it is negative. 
%\\

\section{Integer homology spheres and irreducible $SU(2)$ representations}
Kronheimer-Mrowka's non-vanishing result \cite[Corollary 7.17]{KM_sutures} of instanton knot Floer homology is the essential input in the following 
\begin{prop}
Let $Y$ be an integer homology 3-sphere which is the branched double cover of a non-trivial knot $K$ in $S^3$. Then there is an irreducible representation $\psi: \pi_1(Y) \to SU(2)$. 
\end{prop}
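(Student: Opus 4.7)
The plan is to argue by contradiction, reducing the existence of an irreducible $SU(2)$ representation of $\pi_1(Y)$ to the known existence of an irreducible meridian-traceless $SU(2)$ representation in $R(K;\i)$ for any non-trivial knot, the latter being the deep theorem of Kronheimer--Mrowka.

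First I would record the arithmetic consequence of $Y$ being an integer homology sphere: since $|H_1(\Sigma_2(K);\Z)| = \det(K)$ (whenever $\det(K) \neq 0$), the hypothesis forces $\det(K) = 1$. Next, suppose for contradiction that every $SU(2)$ representation of $\pi_1(Y)$ is reducible, i.e.\ abelian. Because $H_1(Y;\Z) = 0$, every abelian $SU(2)$ representation factors through the trivial group; in particular $\pi_1(Y)$ has only cyclic $SU(2)$ representations, and the hypothesis $H_1(Y;\Z/2) = 0$ of Lemma \ref{cyclic vs abelian} applies, so $\pi_1(Y)$ has only cyclic $SO(3)$ representations as well.

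Now I would apply Proposition \ref{folk} to the knot $K$: the conclusion just reached is exactly the hypothesis of that proposition, so $K$ is $SU(2)$-simple, meaning $R(K;\i)$ consists entirely of binary dihedral representations. By Klassen's theorem \cite{Klassen}, the number of conjugacy classes of non-abelian binary dihedral representations in $R(K;\i)$ equals $(\det(K)-1)/2$, which vanishes because $\det(K) = 1$. Hence $R(K;\i)$ contains only the (unique) reducible conjugacy class.

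The final step is the contradiction: Kronheimer--Mrowka proved, as a consequence of their instanton Floer homology machinery underlying the proof of Property P, that for every non-trivial knot $K \subset S^3$ the space $R(K;\i)$ contains an irreducible representation. This contradicts the conclusion of the previous paragraph, so the initial assumption fails and $\pi_1(Y)$ must admit an irreducible representation in $SU(2)$. The main obstacle, and the only non-formal input, is precisely this Kronheimer--Mrowka existence result; everything else is a bookkeeping translation between the representation theory of the knot group, of the orbifold group $G_{K,m^2}$, and of $\pi_1(\Sigma_2(K))$ already carried out in Section 3.
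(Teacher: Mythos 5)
Your argument is correct, but it runs in the opposite direction from the paper's. Both proofs hinge on the same deep input, Kronheimer--Mrowka's theorem that a non-trivial knot admits an irreducible meridian-traceless $SU(2)$ representation; the difference is in how that input is transferred to $\pi_1(Y)$. The paper argues directly: it takes the irreducible representation $\rho \in R(K;\i)$, projects to a non-abelian representation of the orbifold group $G_{K,m^2}$ in $SO(3)$, and restricts to the index-two subgroup $\pi_1(\Sigma_2(K))$. The restriction cannot be abelian, since $H_1(Y;\Z)=0$ would then force it to be trivial and the whole representation would factor through $G_{K,m^2}/\pi_1(\Sigma_2(K)) \cong \Z/2$, contradicting non-abelianness; the non-abelian restriction then lifts to the desired irreducible $SU(2)$ representation. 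You instead argue by contradiction through Proposition \ref{folk} and Lemma \ref{cyclic vs abelian}: if $\pi_1(Y)$ were $SU(2)$-abelian (hence $SO(3)$-cyclic), $K$ would be $SU(2)$-simple, and Klassen's count $(\det(K)-1)/2 = 0$ of irreducible binary dihedral classes (using $\det(K) = |H_1(Y;\Z)| = 1$) would leave $R(K;\i)$ with only abelian representations, contradicting Kronheimer--Mrowka. Your route has the virtue of reusing the machinery already assembled in Section 3 with no new group-theoretic work, at the cost of invoking Klassen's theorem as an additional external input and of being purely an existence-by-contradiction argument; the paper's direct descent produces the irreducible representation of $\pi_1(Y)$ explicitly as the restriction of a lift and needs nothing beyond the short exact sequence (\ref{orbifold group exact sequence}).
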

\begin{proof}
The crucial input is that there is an irreducible representation $\rho: \pi_1(S^3 \setminus K) \to SU(2)$ which maps a meridian $m$ to the element $\i \in SU(2)$. This was proved by Kronheimer-Mrowka in \cite[Corollary 7.17]{KM_sutures}. As in the proof of Proposition \ref{folk} above, we see that this representation induces a  representation with non-abelian image
\[
	\overline{\rho}: G_{K,m^2} \to SO(3) \, .
\]
The fundamental group $\pi_1(Y) = \pi_1(\Sigma_2(K))$ is contained in $G_{K,m^2}$ as a subgroup of index $2$. If the restriction of $\overline{\rho}$ to this subgroup had abelian image it would have trivial image, as $Y$ is a homology 3-sphere. Therefore, $\overline{\rho}$ would factor through $G_{K,m^2}/\pi_1(Y) \cong \Z/2$, contradicting the fact that $\overline{\rho}$ has non-abelian image. 

Hence the restriction of $\overline{\rho}$ to $\pi_1(Y)$ has non-abelian image, and hence lifts to an irreducible representation $\psi: \pi_1(Y) \to SU(2)$. 
\end{proof}

By results of Bonahon-Siebenmann every graph 3-manifold is a branched double cover of an arborescent link $L$, see \cite[Appendix A]{Bonahon-Siebenmann} and \cite[Section 1.1.8 and 1.1.9]{Saveliev}. If the branched double cover of $L$ is an integer homology sphere, then the link $L$ can have only one component by formula (\ref{number of components homological}). 
Therefore, we obtain the following

\begin{corollary}
Any graph 3-manifold $Y$ which is an integer homology sphere admits an irreducible representation $\rho: \pi_1(Y) \to SU(2)$.   
\end{corollary}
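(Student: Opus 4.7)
The plan is to reduce the corollary to the preceding proposition by exhibiting $Y$ as the branched double cover of a non-trivial knot in $S^3$, which is precisely what Bonahon--Siebenmann's theorem provides for graph manifolds.

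First, I would invoke Bonahon--Siebenmann (as cited just before the statement) to realise $Y$ as $\Sigma_2(L)$ for some arborescent link $L \subset S^3$. Second, I would apply formula (\ref{number of components homological}) from the proof of Proposition \ref{double cover}: since $Y$ is an integer homology sphere we have in particular $H_1(Y;\Z/2) = 0$, so $\dim_{\Z/2} H_1(\Sigma_2(L);\Z/2) = 0$, forcing the number of components of $L$ to equal one. Hence $L$ is a knot.

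Third, I must exclude the case $L$ unknotted. If $L$ were the trivial knot then $\Sigma_2(L) \cong S^3$, whose fundamental group is trivial and which admits no irreducible $SU(2)$ representation at all; in this case the corollary is vacuous (or, depending on convention, $S^3$ is not counted as a non-trivial graph manifold), so we may assume $Y \not\cong S^3$ and conclude that $L$ is a non-trivial knot. Fourth and finally, the previous Proposition (for $Y$ an integer homology sphere realised as the branched double cover of a non-trivial knot $K = L$) produces the desired irreducible representation $\rho : \pi_1(Y) \to SU(2)$.

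The main obstacle is genuinely handled by the inputs cited rather than in the proof itself: all the actual work sits in Kronheimer--Mrowka's theorem (the existence of an irreducible meridian-traceless representation of $\pi_1(S^3 \setminus L)$, used in the preceding Proposition) and in Bonahon--Siebenmann's structure theorem for graph manifolds. The only subtlety internal to this argument is ensuring $L$ is non-trivial, which follows from the assumption that $Y$ is not $S^3$; everything else is a direct concatenation of the previous Proposition with the arborescent-cover description of $Y$.
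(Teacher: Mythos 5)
Your proposal is correct and follows essentially the same route as the paper: Bonahon--Siebenmann realises $Y$ as the branched double cover of an arborescent link, formula (\ref{number of components homological}) forces that link to be a knot since $H_1(Y;\Z/2)=0$, and the preceding Proposition then applies. Your explicit exclusion of the unknot case (where $Y \cong S^3$ and the statement is vacuous or excluded by convention) is a small point of care that the paper leaves implicit.
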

\qed

\section{Discussion, Perspectives}

\begin{question}
Is the 3-sphere the only integer homology sphere admitting only the trivial $SU(2)$ representation? It follows from results of Kronheimer-Mrowka that no integer homology sphere obtained by Dehn surgery on a non-trivial knot has only the trivial $SU(2)$ representation \cite{KM_Dehn,KM_p}. However, it is known that there are integer homology 3-spheres which cannot be obtained by surgery on a knot.
\end{question}
A Floer sphere $Y$ is a 3-manifold that has the same instanton Floer homology $I_*(Y)$ as the 3-sphere. There seems to be no example known of a Floer sphere other than the 3-sphere. Any integer homology sphere admitting only the trivial $SU(2)$ representation of its fundamental group is a Floer sphere. An affirmative answer to the above question  would therefore raise the possibility that instanton Floer homology (of homology 3-spheres) detects the 3-sphere.

%The examples of 3-manifolds $Y(T(p,q),T(r,s))$ we were considering above are graph manifolds if both torus knots $T(p,q)$ and $T(r,s)$ are non-trivial. It is therefore natural to ask the following question: 

\begin{question}
Are there hyperbolic 3-manifolds or `mixed' 3-manifolds which have only cyclic or only abelian $SO(3)$ or $SU(2)$ representations? 
\end{question}
The answer to this question is yes due to results of Cornwell \cite{Cornwell} which have appeared after the first posting of this article on the arXiv. After Corollary 6.7 of his paper, Cornwell lists the knot $8_{18}$ among other $SU(2)$-simple knots which are not 2-bridge. Hence by Theorem 1.1 of his paper ($8_{18}$ is a 3-bridge knot) all representations of the fundamental group of the branched double cover of $8_{18}$ are $SU(2)$-cyclic.  
But the branched double cover of the knot $8_{18}$ is hyperbolic. This knot is the Turk's head knot with notation $(2 \times 4)^*$ in \cite[Section 4.3]{Bonahon-Siebenmann}, and in this reference it is shown that its branched double cover is hyperbolic.

\begin{question}
Can the rational homology spheres $Y(T(p,q),T(r,s))$ be obtained by surgery on a knot? If this were the case, then this knot would admit surgery slopes which are in general both $SU(2)$-cyclic  and $SO(3)$-cyclic. Examples of such surgeries comprise the Dehn surgery with slope $37/2$ on the Pretzel knot $P(-2,3,7)$, according to Dunfield \cite{Dunfield}. This is not a Lens space surgery as all Lens space surgeries have integer surgery slope. It is claimed in \cite{Lin} that this example of Dunfield's is in fact the manifold which is $Y(T(3,2),T(-3,2))$ in our notation. 
\end{question}

An anonymous referee has drawn the author's attention to the following class of $SU(2)$-cyclic, but not $SO(3)$-cyclic surgeries.
\begin{example}\label{connected sum}
According to \cite[Proposition 4]{Moser}, surgery with coefficient $2n$ on the torus knot $T(2,n)$ is the manifold $L(2,1) \# L(n,2)$. This has only cyclic $SU(2)$-representation of its fundamental group, because the free factor $\Z/2$ can only map to the centre of $SU(2)$. However, these manifolds have irreducible $SO(3)$-representa\-tions. 
\end{example}

%\begin{question}
%Of all graph 3-manifolds, are the manifolds $Y(T(p,q),T(r,s))$ the only manifolds with only cyclic $SU(2)$ or $SO(3)$ representations?
%This question seems easier to answer than the question on hyperbolic 3-manifolds or `mixed' 3-manifolds.
%\end{question}


\begin{thebibliography}{99999}
\bibitem{Knotinfo} J. C. Cha and C. Livingston, {\em KnotInfo: Table of Knot Invariants, http://www.indiana.edu/~knotinfo,} November 2014
\bibitem{Boileau_Zieschang} M. Boileau, H. Zieschang, {\em Nombre de ponts et g\'en\'erateurs m\'eridiens des entrelacs de
   Montesinos}, Comment. Math. Helv.,
   60 (1985), no. 2, 270--279.
\bibitem{Bonahon-Siebenmann} F. Bonahon, L. Siebenmann, {\em New Geometric Splittings of Classical Knots
and
the Classification and Symmetries of
Arborescent Knots}, to appear in Geometry \& Topology Monographs
\bibitem{Cohen} D. Cohen, {\em Combinatorial group theory: a topological approach.}, London Mathematical Society Student Texts, 14. Cambridge University Press, Cambridge, 1989.
\bibitem{Collin-Saveliev} O. Collin, N. Saveliev, {\em Equivariant Casson invariants via gauge theory,} 
J. Reine Angew. Math. 541 (2001), 143--169. 
\bibitem{Cornwell} C. Cornwell, {\em Character varieties of knot complements and branched double-covers via the cord ring,} arXiv:1509.04962
\bibitem{Dunfield} N. Dunfield, private communication.
\bibitem{Floer}
A. Floer, {\em 
An instanton-invariant for 3-manifolds}, 
Comm. Math. Phys. 118 (1988), no. 2, 215--240. 
\bibitem{FKP} Y. Fukumoto, P. Kirk and J. Pinz\'on-Caicedo, {\em 
Traceless SU(2) representations of 2-stranded tangles}, arXiv:1305.6042 
\bibitem{Greene} J. Greene, {\em A spanning tree model for the Heegaard Floer homology of a branched double-cover}, J. Topol. 6 (2013), no. 2, 525--567. 
\bibitem{Greene_Levine} J. Greene, A. Levine, {\em Strong Heegaard diagrams and strong L-spaces,} preprint, arXiv:1411.6329
\bibitem{HHK} M. Hedden, C. Herald and P. Kirk, {\em The pillowcase and perturbations of traceless representations of knot groups, }  Geom. Topol. 18 (2014), no. 1, 211--287. 
\bibitem{Hedden-Kirk} M. Hedden, P. Kirk, {\em Instantons, concordance, and Whitehead doubling},  J. Differential Geom. 91 (2012), no. 2, 281--319.
\bibitem{Klassen} E. Klassen, {\em Representations of knot groups in SU(2)}, Trans. Amer. Math. Soc. 326 (1991), no. 2, 795--828. 
\bibitem{Khov} M. Khovanov, {\em A categorification of the Jones polynomial,} Duke Math. J. 101 (2000), no. 3, 359--426.
\bibitem{KM_p} P. Kronheimer, T. Mrowka, {\em Witten's conjecture and property P}, Geom. Topol. 8 (2004), 295--310.
\bibitem{KM_Dehn} P. Kronheimer, T. Mrowka, {\em Dehn surgery, the fundamental group and $SU(2)$}, Math. Res. Letters 11 (2004), no. 5-6, 741--754.
\bibitem{KM_knots} P. Kronheimer, T. Mrowka, {\em Knot homology groups from instantons}, J. Topol. 4 (2011), no. 4, 835--918. 
\bibitem{KM_sutures} P. Kronheimer, T. Mrowka, {\em Knots, sutures and excision}, J. Differential Geom. 84 (2010), no. 2, 301--364.
\bibitem{KM_Alex} P. Kronheimer, T. Mrowka, {\em Instanton Floer homology and the Alexander polynomial}, Algebr. Geom. Topol. 10 (2010), no. 3, 1715--1738. 
\bibitem{KM_ss} P. Kronheimer, T. Mrowka, {\em Khovanov homology is an unknot-detector},  Publ. Math. Inst. Hautes Etudes Sci. No. 113 (2011), 97--208.
\bibitem{Lee} E. Lee, {\em An endomorphism of the Khovanov invariant,}
Adv. Math. 197 (2005), No. 2, 554--586.
\bibitem{Lickorish} W. Lickorish, {\em Introduction to Knot theory}, Graduate Texts in Math. 175, Springer (1997)
\bibitem{Lim} Y. Lim, {\em Instanton homology and the Alexander polynomial}, Proc. Amer. Math. Soc.
138 (2010), 3759--3768. 
\bibitem{Lin} J. Lin, {\em $SU(2)$-cyclic surgeries on knots}, arXiv:1307.0070
\bibitem{LZ} A. Lobb, R. Zentner, {\em On spectral sequences from Khovanov homology}, preprint (2013)
\bibitem{Menasco} W. Menasco, {\em Closed incompressible surfaces in alternating knot and link complements,} Topology 23 (1984), no. 1, 37--44.
\bibitem{Montesinos_Orsay} J. Montesinos, {\em Rev\^etements ramifi\'es de noeuds, espaces fibr\'es de Seifert et scindements de Heegaard}, Lecture notes of a conference in Orsay, spring 1976. 
\bibitem{Moser} L. Moser, {\em Elementary surgery along a torus knot}, Pacific J. Math. 38 (1971), no. 2, 737--745.
\bibitem{Motegi} K. Motegi, {\em Haken manifolds and representations of their fundamental group in $SL(2,\C)$}, Topology and its Applications 29 (1988), 207--212.
\bibitem{OzSz1}  P. Ozsv\'ath, Z. Szab\'o, {\em Holomorphic disks and topological invariants for closed three-manifolds,} Ann. of Math. (2)
159 (2004), no. 3, 1027--1158.
\bibitem{OzSz2} P. Ozsv\'ath, Z. Szab\'o, {\em Holomorphic disks and three-manifold invariants: propert
ies and applications
,} Ann. of Math. (2) 159
(2004), no. 3, 1159--1245.
\bibitem{Saveliev} N. Saveliev, {\em Invariants for homology 3-spheres}, Encyclopaedia of Mathematical Sciences, Springer, Vol. 140 (2002)
\bibitem{Schreier} O. Schreier, {\em Über die gruppen $A^aB^b=1$,}
Abh. Math. Sem. Univ. Hamburg 3 (1924), no. 1, 167--169. 
%\bibitem{OzSz3} P.~Ozsv\'ath, Z.~Szab\'o, {\em On the Heegaard Floer homology of branched double-covers,} Adv. Math. 194 (2005), no. 1, 1--33.
\bibitem{Thistlethwaite} M. Thistlethwaite, {\em On the algebraic part of an alternating link,}
Pacific J. Math. 151 (1991), no. 2, 317--333. 
\bibitem{Z} R. Zentner, {\em Representation spaces of pretzel knots}, Algebraic \& Geometric Topology 11 (2011), no. 5, 2941--2970.

\end{thebibliography}
\end{document}